\newtheorem{theorem}{Theorem}[section]
\newtheorem{lemma}[theorem]{Lemma}
\numberwithin{equation}{section}
\theoremstyle{definition}
\newtheorem{assumption}[theorem]{Assumption}
\newtheorem{remark}[theorem]{Remark}
\theoremstyle{plain}
\newtheorem{example}[theorem]{Example}
\definecolor{winered}{rgb}{0.6,0,0}
\definecolor{ocean}{rgb}{0,0.1,0.6}
\definecolor{forest}{rgb}{0,0.7,0.2}
\definecolor{fill}{rgb}{.92,1,.92}
\definecolor{sunset}{rgb}{.9,0.4,0}
\newcommand{\norm}[1]{\left\lVert#1\right\rVert}
\newcommand{\abs}[1]{\left\lvert#1\right\rvert}
\newcommand\cA{\mathcal{A}}
\newcommand\cB{\mathcal{B}}
\newcommand\cE{\mathcal{E}}
\newcommand\cF{\mathcal{F}}
\newcommand\cG{\mathcal{G}}
\newcommand\cH{\mathcal{H}}
\newcommand\cM{\mathcal{M}}
\newcommand\cS{\mathcal{S}}
\newcommand\cT{\mathcal{T}}
\newcommand\cX{\mathcal{X}}
\newcommand{\overbar}[1]{\mkern 3mu\overline{\mkern-3mu#1\mkern-1mu}\mkern 1mu}
\newcommand\bD{\mathbb{D}}
\newcommand\bE{\mathbb{E}}
\newcommand\bN{\mathbb{N}}
\newcommand\bP{\mathbb{P}}
\newcommand\bR{\mathbb{R}}
\newcommand\bT{\mathbb{T}}
\newcommand\TT{\mathbf{T}}
\newcommand{\D}{\mathrm{d}}
\newcommand{\DD}{\mathrm{D}}
\newcommand{\KK}{\mathrm{K}}
\newcommand{\MM}{\mathrm{M}}
\newcommand{\half}{\frac{1}{2}}
\newcommand{\ep}{\varepsilon} 
\newcommand{\notthis}[1]{}
\newcommand{\one}{\mathbbm{1}}
\newcommand{\dt}{\, \mathrm{d}t}
\newcommand{\ds}{\, \mathrm{d}s}
\newcommand{\du}{\, \mathrm{d}u}
\newcommand{\dx}{\, \mathrm{d}x}
\newcommand{\ttn}{\bm{t_n}}
\newcommand{\xxn}{\bm{x_n}}
\newcommand{\ttk}{\bm{t_k}}
\newcommand{\xxk}{\bm{x_k}}
\title{Pathwise large deviations for white noise chaos expansions}
\author{Alexandre Pannier}
\address{Department of Mathematics, Imperial College London}
\email{a.pannier17@imperial.ac.uk}
\thanks{The author expresses his sincere gratitude to Antoine Jacquier for his guidance and support, and to Eyal Neuman and Amarjit Budhiraja for stimulating discussions.}
\subjclass[2010]{60F10, 60G15, 60H05, 60H07}
\keywords{Large deviations, Wiener chaos, white noise measure, weak convergence, multiple stochastic integrals, Malliavin calculus}
\begin{document}

\begin{abstract}
We consider a family of continuous processes~$\{X^\varepsilon\}_{\varepsilon>0}$ which are measurable with respect to a white noise measure, take values in the space of continuous functions~$C([0,1]^d\colon\mathbb{R})$, and have the Wiener chaos expansion
    \[
    X^\varepsilon = \sum_{n=0}^{\infty} \varepsilon^n I_n \big(f_n^{\varepsilon} \big).
    \]
   We provide sufficient conditions for the large deviations principle of~$\{X^\varepsilon\}_{\varepsilon>0}$ to hold in~$C([0,1]^d\colon\mathbb{R})$, thereby refreshing a problem left open by P\'erez--Abreu (1993) in the Brownian motion case. The proof is based on the weak convergence approach to large deviations: it involves demonstrating the convergence in distribution of certain perturbations of the original process, and thus the main difficulties lie in analysing and controlling the perturbed multiple stochastic integrals. 
   Moreover, adopting this representation offers a new perspective on pathwise large deviations and induces a variety of applications thereof.
\end{abstract}


\maketitle

\section{Introduction}

This paper contributes to the large body of work dedicated to limit theorems in relation with the Wiener chaos.
In a probability space~$(\Omega,\cF,\bP)$, any Gaussian functional~$F\in L^2$ can be represented via its Wiener chaos expansion
\begin{equation} \label{eq:WCE}
    F = \bE F + \sum_{n=1}^\infty I_n(f_n),
\end{equation}
where, in the setup we are interested in, the terms of this (orthogonal) decomposition are multiple Wiener--It\^o integrals and the kernels~$\{f_n\}_{n\ge1}$ are deterministic and uniquely determined by~$F$.  We refer to the monograph~\cite{Nualart06} for more details. 
By exposing the inner structure of Gaussian functionals in a systematic way, this route generated a fruitful field of asymptotic methods. 
In particular, the Fourth Moment Theorem of Nourdin and Peccati~\cite{NP05}, later combined with Stein's method~\cite{ET15,NP09Stein,NP12}, is an active field of research which studies the convergence of elements of a fixed Wiener chaos, i.e. multiple Wiener--It\^o integrals. 
In the same line of research, the Breuer--Major theorem~\cite{BM83} was recently revisited by means of Malliavin calculus and Wiener chaos in infinite-dimensional spaces~\cite{CNN20,NZ20}.
These results and the related limit theorems from~\cite{AN19,BT20,NNP13} fall, for the most part, into the scope of central and noncentral limit theorems on Wiener chaos, and were successfully applied to stochastic partial differential equations~\cite{DNZ19,HNV20,NZ20}.

However, to the best of our knowledge, the literature on large deviations for such expansions has not evolved since the nineties.
A family $\{X^\ep\}_{\ep>0}$ of random variables taking values in some Polish space~$\cX$ satisfies a large deviations principle (LDP) with rate function $\Lambda\colon\cX\to[0,+\infty]$ if, for all Borel subsets $B\subset\cX$, the inequalities
\begin{align} \label{eq:LDPdef}
-\inf_{x\in B^\circ} \Lambda(x) \le 
\liminf_{\ep \downarrow 0} \ep \log \bP \big( X^\ep\in B \big) 
\le \limsup_{\ep \downarrow 0} \ep \log \bP \big( X^\ep\in B \big) 
\le -\inf_{x\in \overbar{B}} \Lambda(x)
\end{align}
hold, and the level sets $\{x\in\cX: \Lambda(x)\le M\}$ of~$\Lambda$ are compact for all $M>0$. 
The LDP for families of a fixed Wiener chaos, even Banach space-valued, is well-known from~\cite{DS89,Ledoux90} and was derived again in~\cite{MWNPA92} in the space of continuous functions. Around the same time, an LDP for a whole, real-valued, Wiener chaos expansion was obtained in~\cite{PAT94} in the Brownian motion case, see~\cite{PA93} for a review. 
Our main contribution, Theorem~\ref{thm:main}, reconciles the best of both worlds: we establish an LDP in the space of continuous functions \emph{and} for the whole Wiener chaos expansion. Moreover,  we consider the case of a general white noise measure. 

Among the many limit theorems on Wiener space cited above, very few are concerned with random variables on path space. The lift from the real-valued LDP of~\cite{PAT94} to our sample-path version parallels the transition from the Gärtner-Ellis theorem for finite-dimensional LDPs to the theory of Friedlin and Wentzell~\cite{Freidlin84}: both aim at studying the probability that the whole path of a stochastic process hits a given set. Furthermore, the pathwise LDP is stronger in the sense that an application of the contraction principle suffices to recover the finite-dimensional one.

The proof of~\cite{PAT94} relied on a type of exponentially good approximation theorem, which preserves the LDP with a change of the rate function. Although these techniques are nowadays ubiquitous in sample-path large deviations for e.g. solutions of stochastic equations, applying this approximation to a Wiener chaos expansion in the space of continuous functions is challenging since it requires a good estimate of
\[
\bP\bigg( \sup_{t\in[0,1]} \abs{I_n(f_n^t)} > \delta \bigg), \quad \mathrm{for\; all} \; n\ge1.
\]
This highlights the difficulty of deriving bounds for such objects, in contrast with stochastic equations where one can exploit the local martingale structure of the stochastic integrals. It also exposes the limits of the standard approach based on exponential approximations, especially in infinite-dimensional spaces.

Deviating from this approach, we will instead follow the methods presented in the monographs~\cite{DE97,BD19} which rely upon the equivalence between the LDP~\eqref{eq:LDPdef} and the so-called Laplace principle with same rate function~$\Lambda$: for all continuous bounded maps $H\colon\cX\to\bR$,
\begin{equation*}
    \lim_{\ep\downarrow0} - \ep \log \bE\left[\exp\left\{-\frac{H(X^\ep)}{\ep}\right\}\right] =  \inf_{x\in\cX}\big\{ \Lambda(x)+H(x)\big\}.
    \label{eq:Laplace}
\end{equation*}
This route introduced by Dupuis and Ellis was dubbed the weak convergence approach because it essentially boils down to proving the convergence in distribution of a family of processes related to~$\{X^\ep\}$. 
It enabled to prove LDPs for new types of Gaussian-driven systems such as multiscale diffusions~\cite{DS12,Morse17}, stochastic Volterra equations~\cite{JP20,Zhang08} and infinite-dimensional processes~\cite{BM09,BD01,BDM08,BDS19}.
Our choice of setup therefore lies at the intersection between the domains of application of the Wiener chaos expansion and the weak convergence approach. A conducive integrator is the white noise measure displayed in~\cite[Section 1.1.2]{Nualart06}, which, as we will see in the next section, covers a wide range of infinite-dimensional Gaussian processes. We note at this point that spatially homogeneous coloured noise and Poisson random measures also offer potential research directions. 

Regarding the choice of state space, we are only limited by the tightness criteria available to us.
We consider the space of continuous functions~$C(\KK\colon\bR)$, where~$\KK$ is a compact subset of~$\bR^d$. To the best of our knowledge, the latter condition is necessary to obtain a type of Kolmogorov continuity criterion; in this paper we will rely upon the multidimensional version derived in~\cite{Ibra83}. There is no apparent reason why the theory should not also apply to~$C(\KK\colon\bR^N)$ for~$N\ge2$ but tracking the indices becomes cumbersome so we restrict to the one-dimensional case for clarity. Nevertheless, our state space covers infinite-dimensional processes living in~$C \big ([0,1] \colon C([0,1]\colon\bR) \big)$ for instance. Relatively to the classical discretisation and approximation argument, the weak convergence approach has proved efficient in deriving LDPs in such spaces.

In the framework of stochastic (partial differential) equations, pathwise uniqueness is a necessary and sometimes tricky condition, while in this paper the difficulty lies elsewhere. Here, we must consider the convergence in distribution of a family of processes where the stochastic integration takes place with respect to a \emph{shifted} white noise, such as~$W^u(\D x)=W(\D x) + \int u(x) \dx$. Hence, after replacing~$W(\D x)$ by~$W^u(\D x)$ in the multiple stochastic integrals, the terms of the decomposition~\eqref{eq:WCE} are not independent any longer, and developing~$W^{u}(\D x_1) \cdots W^{u}(\D x_n)$ generates~$2^n$ multiple integrals. We then classify and analyse these new integrals in order to obtain the necessary estimates.

We stress that the assumptions laid down in this paper only involve the kernels of the chaos expansion and are therefore purely deterministic. 
Moreover, in the context of pathwise large deviations, the chaos expansion is more agnostic than the stochastic equations paradigm, and hence this representation opens the LDP gates to uncommon continuous models. In particular, we provide applications to (multiple) Skorohod integrals and linear Skorohod equations driven by space-time white noise, and to Wick--It\^o integrals driven by fractional Brownian motion (fBm).
So far, LDPs for anticipating integrals have only been considered in the context of anticipating SDEs driven by a standard Brownian motion~\cite{MM98,MNS91a,MNS92}. Related asymptotic results include central limit theorems for multiple~\cite{NN10} and fBm-driven~\cite{BBN20,HN14} Skorohod integrals. For completeness, we also mention the LDPs for stochastic integrals derived in~\cite{Ganguly18,Garcia08}.

The outline of the paper is the following. Section~\ref{sec:prelis} introduces the white noise and the chaos expansion. In Section~\ref{sec:main}, we describe our model, assumptions, and main result. The proof is presented in Section~\ref{sec:proof}: first we adapt sufficient conditions for the LDP of Gaussian functionals to our framework, then we lay down the foundations of the integrals under consideration, and finally we split the proof of the conditions in different lemmas. Section~\ref{sec:applis} gathers our applications and we make some concluding remarks in Section~\ref{sec:ccl}.


\section{Definitions and notations}
\label{sec:prelis}

\subsection{The white noise measure}
Let us fix a probability space~$(\Omega,\cF,\bP)$, a convex set~$\bT\subseteq[0,+\infty)$ that includes~$0$, and a sigma-finite measure~$\mu$ on a measurable space~$(\bm E,\cE)$. Then we define the parameter space~$\TT:=\bT\times \bm E$ and its~$\sigma$-algebra~$\cT:=\cB(\bT)\otimes \cE$. Following~\cite{Nualart06}, we define a white noise measure~$\dot W$ as an~$L^2(\Omega)$-Gaussian measure on~$(\TT,\cT)$ based on the (atomless) product measure~$\widehat \mu:=\lambda\otimes\mu$, where~$\lambda$ is the Lebesgue measure on~$\bT$. This means~$\dot W$ has zero mean and the following covariance structure for all~$A_1,A_2 \in \cT$,
\[
\bE\Big[ \dot W\big(A_1\big) \dot W\big(A_2\big) \Big] = \widehat\mu\big(A_1 \cap A_2 \big) .
\]
This exhibits the main feature of the white noise measure, namely that it takes independent values on any family of disjoint subsets of~$\TT$. 
The following examples demonstrate how one can recover well-known Gaussian processes from the white noise measure. For all~$n\ge1$, we will henceforth write~$L^2(\TT^n)$ to denote the Hilbert space~$L^2\big(\TT^n,\widehat\mu^{\,\otimes n} \big)$ of real-valued functions.

\begin{example}[Space-time white noise]
\label{ex:BS}
In the case where~$(\bm E,\cE)= \big(\bR^m,\cB(\bR^m) \big)$, $m\ge1$, and~$\mu$ is the Lebesgue measure on~$\bR^m$, $\dot W$ is called space-time white noise.
When~$\mu$ is not the Lebesgue measure one speaks of spatially inhomogeneous white noise~\cite{Neuman18}.
A similar setup with~$\bm E=\bT^m$ also gives rise to the Brownian sheet~\cite[Example 3.10]{DKMNX08}. 
\end{example}

\begin{example}[Isonormal process]
The theory of integration presented in~\cite{Nualart06} allows to define an isonormal process~$X$ over~$L^2(\TT)$ as the stochastic integral
\[
X(h) := \int_\TT h(t,x)  W(\D t \dx), \quad h\in L^2(\TT),
\]
where, following standard notations, we drop the dot when integrating. This generates the covariance structure~$\bE\big[ X(h) X(g) \big] = \langle h, g \rangle_{L^2(\TT)}$, for~$h,g\in L^2(\TT)$.

We can recover the cylindrical Brownian motion~$B$ defined in~\cite[Definition 11.3]{BD19} by setting~$B_t(\phi):= X \big( \one_{[0,t]}\, \phi \big)$, for all~$\phi\in L^2(\bm E)$ and~$t\ge0$.
Another variant consists in picking~$\phi\in C_c^\infty(\TT)$.
\end{example}

\begin{example}[Multidimensional Brownian motion]
\label{ex:nDBM}
We reproduce Example 1.1.2 in~\cite{Nualart06}: let~$\bm E=\{1,\cdots,N\}$ for some~$N\ge1$ and~$\mu$ be the uniform measure which gives mass one to each point~$1, \cdots , N$.  Then~$W^i(t):=\dot W([0,t]\times\{i\})$ is a standard Brownian motion for all~$t\in\bT,\,i\in\bm E,$ with covariance~$\bE[W^i(t) W^j(s)]= (s\wedge t) \one_{i=j}$. For any~$h\in L^2(\TT)$, one can also define its stochastic integral, which coincides with the Itô integral:
\[
W(h) = \sum_{i=1}^N \int_\bT h^i(t) \, \D W^i(t).
\]
\end{example}

\begin{example}[Fractional Brownian motion] \label{ex:fbm}
For this example, we refer to~\cite[Section 5.1]{Nualart06} and set~$\TT=\bT$. The process~$W$ defined as~$W_t:=\dot W([0,t])$ is a standard Brownian motion, and there exists a deterministic function~$K_H\colon\TT^2\to\TT$ such that~$\{B^H_t\}_{t\in\TT}$ defined as
\begin{equation}\label{eq:fbmdef}
B^H_t := \int_0^t K_H(t,s) \,\D W_s
\end{equation}
is a fractional Brownian motion with Hurst exponent~$H\in(0,1)$ and covariance function
\[
R_H(t,s)=\half \big(t^{2H} + s^{2H} -\abs{t-s}^{2H}\big).
\]
\end{example}
\subsection{The Wiener chaos}\label{subsec:chaos}
Let~$\cF^W:=\sigma\big(\big\{\dot W(A):A\in\cT \big\}\big)$ denote the~$\sigma$-field generated by~$\dot W$; it is then clear that~$\cF^W$ coincides with the~$\sigma$-fields generated by the examples presented above. 
From now on, we will write~$L^2(\Omega)$ as short for~$L^2 \big(\Omega,\cF^W,\bP \big)$.
It is well-known that any random variable~$F\in L^2 (\Omega)$ has the Wiener chaos expansion~\eqref{eq:WCE},
where the convergence takes place in~$L^2(\Omega)$, see for instance~\cite[Theorem 1.1.2]{Nualart06}.
For all~$n\ge1$, and for the remainder of the paper, $f_n\in L^2(\TT^n)$ are symmetric functions and~$I_n\colon L^2(\TT^n)\to L^2(\Omega)$ is a continuous linear map that can be interpreted as a multiple integral:
\begin{equation}\label{eq:Indef}
I_n(f_n) := \int_{\TT^n} f_n(\ttn,\xxn) \,W(\D t_1 \, \D x_1)\cdots W(\D t_n\, \D x_n),
\end{equation}
with~$\ttn=(t_1,\cdots,t_n)$ and~$\xxn=(x_1,\cdots,x_n)$, where~$(t_i, x_i) \in\TT$ for all~$i\in \llbracket 1,n \rrbracket$. The~$n$th Wiener chaos~$\cH_n$ is the subset of~$L^2(\Omega)$ made of all the elements with the representation~\eqref{eq:Indef}.
The notion of integration is recalled in subsection~\ref{subsec:integral}, but we already highlight the important identity
\begin{equation} \label{eq:Inidentity}
    \norm{I_n(f_n)}_{L^2(\Omega)} = \sqrt{n!} \norm{f_n}_{L^2(\TT^n)}.
\end{equation}
In the next section we specify our framework and the main large deviations result. 

\section{Statement of the main result} \label{sec:main}
We aim at proving a large deviations principle for a family~$\{X^\ep\}_{\ep>0}$ of processes taking values in~$C(\KK\colon\bR)$, where~$\KK\subset\bR^d$ is a compact subspace equipped with the Euclidean norm denoted~$\norm{\cdot}$, and~$d\ge1$. For all~$\ep>0$ and~$z\in\KK$, we define these processes as:
\begin{equation}
X^\ep(z) := \sum_{n=0}^\infty \ep^n I_n(f_n^{\ep,z}).
\label{eq:Xep}
\end{equation}
For all~$n\ge1,\,\ep>0$ and~$z\in\KK$, the kernels~$f_n^{\ep,z}$ are symmetric functions belonging to~$L^2(\TT^n)$, $f_0^{\ep,z}=\bE[X^\ep(z)]$ and~$I_0$ denotes the identity mapping on constants. Since the~$\{I_n\}_{n\ge0}$ are independent, it is clear that~$X^\ep(z)\in L^2(\Omega)$ for all~$z\in\KK$.
We consider the following sufficient conditions, which in particular ensure the continuity of~$X^\ep$, as pointed out in Remark~\ref{rk:continuity}.
\begin{assumption}
\label{assu:main}\
\begin{enumerate}[(A)]
    \item There exist~$\{f_n^z\}_{n\ge0,z\in\KK}$, where~$f_n^z\in L^2(\TT^n)$, such that~$\lim_{\ep\downarrow0} f_0^{\ep,z} = f_0^z$ for all~$z\in\KK$ and
    \begin{align*}
       & \lim_{\ep\downarrow0}\,\sup_{z\in\KK}\, \sup_{n\ge1}\, \norm{f_n^{\ep,z} - f_n^z}_{L^2(\TT^n)} =0.
    \end{align*}
    \item For all~$\ep>0$ small enough, $z\in \KK$ and~$\kappa>0$ the following holds
    \begin{equation}
    \label{eq:seriesbound}
    \sum_{n=0}^\infty \sqrt{n!}\, \kappa^n \Big(\norm{f_n^{z}}_{L^2(\TT^n)} + \norm{f_n^{\ep,z}}_{L^2(\TT^n)} \Big) < \infty.
    \end{equation}
    \item  There exists a concave modulus of continuity~$\omega\colon[0,+\infty)\to[0,+\infty)$ satisfying, for some~\mbox{$\alpha_0>0$,}
\[
\int_0^1 \frac{\omega(s)}{s^{1+\alpha_0}} \ds<\infty \qquad \mathrm{and} \qquad \lim_{s\downarrow0} \frac{\omega(s)}{s} = \infty,
\]
    such that, for all~$\ep>0$ small enough, $y,z\in \mathrm{K}$ and~$\kappa>0$,
    \begin{equation}
    \label{eq:equicont}
        \sum_{n=0}^\infty \sqrt{n!}\, \kappa^n \Big(\norm{f_n^{z}-f_n^{y}}_{L^2(\TT^n)} + \norm{f_n^{\ep,z}-f_n^{\ep,y}}_{L^2(\TT^n)} \Big)
        \le \omega(\norm{z-y}).
    \end{equation}
\end{enumerate}

\end{assumption}

\begin{remark}\label{rk:assu} \
\begin{enumerate}[(A)]
    \item The first assumption is satisfied in the case where~$\bE[X^\ep(z)]$ converges as~$\ep$ goes to zero and~$\{f_n^{\ep,z}\}$ is independent of~$\ep$, which correspond to a purely small-noise problem. Example~\ref{ex:epdep} clarifies the need (or not) for the kernels to be~$\ep$-dependent.
    \smallskip
    \item The bound~\eqref{eq:seriesbound} holds in particular in the case where~$\{f_n^{z}\}$ and~$\{f_n^{\ep,z}\}$ are of {\em exponential type}, as defined in~\cite[Equation (3.2)]{PAT94}: there exists $\Delta>0$ such that for all~$z\in\KK$ and all~$n\ge0$ large enough,
    \begin{equation}
        \norm{f_n^{z}}_{L^2(\TT^n)} + \norm{f_n^{\ep,z}}_{L^2(\TT^n)} \le \frac{\Delta^n}{n!}.
    \label{eq:expotype}
    \end{equation}
    \item The condition~\eqref{eq:equicont} allows us to recover the equicontinuity necessary in Kolmogorov's continuity theorem and therefore the tightness of a family of processes related to~$\{X^{\ep}\}$. 
    The assumption can be satisfied with the modulus of continuity~$\omega:s\to s^\gamma$, $\gamma\in(0,1)$, and is also related to the ``slow-growth" assumption in~\cite{MWNPA92}, which yields the continuity of the map~$z\mapsto I_n(f_n^z)$.
    \item Various examples of processes satisfying all three assumptions are given in Section~\ref{sec:applis}.
\end{enumerate}
\end{remark}

\medskip
For convenience, and despite a mild but common abuse of notations, we define the white noise process~$\{W_t\}_{t\in\bT}$ by~$W_t(E):= \dot W([0,t]\times E)$ for all~$E\in\cE$, along with its filtration~$\cF_t:=\sigma\big(\{ W_s(E): 0\le s\le t, E\in\cE\}\big)$. 
We think of~$W$ as a process when we need the concept of filtration but both points of view are equivalent. 
This allows us to introduce the set of stochastic controls
\begin{equation}\label{eq:cA}
\cA := \bigg\{ u\colon\Omega\times\TT\to\bR, \{\cF_t\} \mathrm{-progressively\; measurable, }\; \bE \left[\int_\TT u(t,x)^2 \,\widehat\mu(\D t \dx) \right] < \infty \bigg\},
\end{equation}
and, for all~$u\in\cA,\,z\in\KK$:
\begin{align}
\label{eq:Xu}
    \overbar X^u(z) := & \sum_{n=0}^\infty J_n^u (f_n^z), \quad \mathrm{where}\\ 
    J_n^u(f_n^z):= &\int_{\TT^n} f_n^z(\ttn,\xxn) \, \big( u(t_1,x_1) \,\widehat\mu(\D t_1 \dx_1) \big) \cdots \big( u(t_n,x_n) \,\widehat\mu(\D t_n \dx_n) \big).
\label{eq:Jnu}
\end{align}
\smallskip

The main result of the paper is the following.
\begin{theorem}
\label{thm:main}
Under Assumption~\ref{assu:main}, the family~$\{X^\ep\}_{\ep>0}$ defined in~\eqref{eq:Xep} satisfies an LDP in~$C(\KK)$ with rate function~$\Lambda\colon C(\KK) \to [0,+\infty]$ given by
\begin{equation}
    \Lambda(\psi) := \inf \bigg\{ \half \norm{u}_{L^2(\TT)}^2 : u\in L^2(\TT),\, \psi=\overbar X^u \bigg\},
\label{eq:ratefunction}
\end{equation}
and~$\Lambda(\psi)=+\infty$ if this set is empty.
\end{theorem}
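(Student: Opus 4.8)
The plan is to follow the weak convergence (Dupuis--Ellis) approach, exploiting the equivalence between the LDP~\eqref{eq:LDPdef} and the Laplace principle together with the Bou\'e--Dupuis--type variational representation for functionals of the white noise $\dot W$ recalled in~\cite{BD19}. Writing $X^\ep=\Phi^\ep(W)$ for the map sending the noise to the chaos series~\eqref{eq:Xep}, this representation expresses $-\ep\log\bE[\exp\{-H(X^\ep)/\ep\}]$ as an infimum over stochastic controls $u\in\cA$ of $\bE[\half\norm{u}_{L^2(\TT)}^2+H(\bar X^{\ep,u})]$, where the \emph{controlled process} $\bar X^{\ep,u}$ is obtained by shifting the noise, $W(\D t\dx)\mapsto W(\D t\dx)+\tfrac1\ep u(t,x)\widehat\mu(\D t\dx)$, inside every multiple integral. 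The proof then reduces to two sufficient conditions: (a) a continuity/compactness statement for the skeleton map $u\mapsto X^u$ defined in~\eqref{eq:Xu}, yielding both the compactness of the level sets of $\Lambda$ and the Laplace lower bound; and (b) the convergence in distribution $\bar X^{\ep,u^\ep}\Rightarrow X^u$ whenever $\{u^\ep\}\subset\cA$ has uniformly bounded energy and $u^\ep\Rightarrow u$ in the weak topology of the ball, which delivers the Laplace upper bound. The first task is to restate these conditions precisely in the present $C(\KK)$--valued, full--chaos setting.

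The structural heart of the argument is the expansion of each perturbed integral under the shift. Since $\dot W$ acts independently on disjoint sets, developing $W^{u}(\D t_1\dx_1)\cdots W^{u}(\D t_n\dx_n)$ with $W^u=W+\tfrac1\ep u\widehat\mu$ produces $2^n$ terms, which I would classify by the number $k\in\llbracket0,n\rrbracket$ of genuine white--noise factors. After multiplying by $\ep^n$, the purely deterministic term ($k=0$) equals $J_n^u(f_n^{\ep,z})$ and carries no power of $\ep$, whereas each mixed term with $k\ge1$ stochastic factors is $\ep^k$ times a multiple integral $I_k$ of an $(n-k)$--fold contraction of $f_n^{\ep,z}$ against $u$. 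Thus only the deterministic term should survive, and the crucial point is to make this rigorous \emph{at the level of the infinite series}. Here Assumption~(A) gives $f_n^{\ep,z}\to f_n^z$ uniformly in $(n,z)$, Assumption~(B) guarantees the absolute convergence of $\sum_n\sqrt{n!}\,\kappa^n\norm{f_n^{z}}_{L^2(\TT^n)}$ for every $\kappa$, and the isometry~\eqref{eq:Inidentity} converts these into summable $L^2(\Omega)$ bounds on the contractions. I expect this to be the main obstacle: the controls $u^\ep$ are merely progressively measurable with bounded energy, so bounding the norms of the contractions uniformly in $n$, and summing the resulting $\ep^k$--weighted stochastic terms while keeping control of the $n!$ combinatorial growth, is where all the work concentrates.

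For Condition~(a) I would fix $N$ and show that $u_j\to u$ weakly in the ball $\{\half\norm{u}^2\le N\}$ forces $X^{u_j}\to X^u$ in $C(\KK)$. Termwise, $J_n^u(f_n^z)$ is a bounded symmetric $n$--linear form in $u$, so the compactness of the associated integral operator turns weak convergence into convergence of each term; uniformity over $z$ and over the tail of the series follows from Assumptions~(B) and~(C), the latter providing, through the concave modulus $\omega$, the equicontinuity in $z$ needed to pass to uniform convergence. The same modulus feeds Kolmogorov's continuity criterion --- the integrability $\int_0^1\omega(s)s^{-1-\alpha_0}\ds<\infty$ is exactly the hypothesis required --- to yield tightness in $C(\KK)$ and the compactness of $\{X^u:\half\norm{u}^2\le N\}$ by Arzel\`a--Ascoli, hence the compact level sets of $\Lambda$.

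Finally, Condition~(b) combines the two previous ingredients: the expansion of the controlled process gives $\bar X^{\ep,u^\ep}=\sum_n J_n^{u^\ep}(f_n^{\ep,z})+R^\ep$, where the remainder $R^\ep$ gathers the $\ep^k$--weighted stochastic terms and tends to $0$ in probability in $C(\KK)$ by the norm estimates above, while the leading deterministic part converges to $X^u$ by the continuity established in Condition~(a) applied to the random, weakly convergent controls, together with the Skorokhod representation. Tightness again follows from Assumption~(C) via Kolmogorov, and identifying the limit as $X^u$ closes the argument. Having verified (a) and (b), the abstract theorem yields the Laplace principle, hence the LDP of Theorem~\ref{thm:main} with the rate function~\eqref{eq:ratefunction}.
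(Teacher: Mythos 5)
Your proposal follows essentially the same route as the paper's proof: the weak convergence/variational framework, the $2^n$-term expansion of the shifted multiple integrals classified by the number of genuine noise factors, the observation that only the purely deterministic term $J_n^u(f_n^{\ep,z})$ survives while the $\ep$-weighted stochastic terms vanish, Kolmogorov-type tightness driven by the modulus $\omega$ of Assumption~(C), Arzel\`a--Ascoli compactness of the skeleton sets, and Skorohod representation plus weak--strong pairing for the convergence of the deterministic parts. The one tool the paper uses that you gloss over is hypercontractivity: since the control $u$ is random, the mixed terms cannot literally be written as Wiener--It\^o integrals of contractions against $u$; the paper instead bounds them via Cauchy--Schwarz by multiple integrals with \emph{deterministic} kernels and then applies hypercontractivity to obtain the $L^p(\Omega)$ ($p>2$) estimates that Kolmogorov's criterion in dimension $d$ actually requires, rather than the $L^2$ bounds your sketch invokes.
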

\begin{remark}
The form of the rate function~\eqref{eq:ratefunction} is reminiscent of the rate function of the finite-dimensional LDP proved by P\'erez--Abreu and Tudor in~\cite[Theorem 3.1]{PAT94}.

In fact, for a fixed~$z\in\KK$, Assumption~\ref{assu:main} (A) and (B) alone are sufficient to derive an LDP for~$\{X^\ep(z)\}_{\ep>0}$ on~$\bR$, with rate function~$\Lambda_R\colon\bR\to[0,+\infty]$ given by
\[
\Lambda_R(r) := \inf \bigg\{ \half \norm{u}_{L^2(\TT)}^2 : u\in L^2(\TT),\, r=\overbar X^u(z) \bigg\}.
\]
\end{remark}
\begin{remark}
A variety of Gaussian processes can be expressed as~$g(W)$ where~$g$ is a linear and measurable map. As examples, we already mentioned the fractional Brownian motion (Example~\ref{ex:fbm}) and will soon introduce a infinite collection of Brownian motions~$\beta=\{\beta_i\}_{i\in\bN}$ (subsection~\ref{subsec:wca}). Furthermore, $\beta$ is itself connected through a measurable map to other infinite-dimensional Gaussian processes such as~$Q$-Wiener processes and cylindrical Brownian motions~\cite[Section 11.1]{BD19}. Hence the process~$\cG(\ep B)$, where~$B$ is one of the aforementioned Gaussian processes, can be written as~$(\cG\circ g)(\ep W)$ and be covered by our framework.
\end{remark}
The following example demonstrates the interest of the~$\ep$-dependence in the kernels.
\begin{example} \label{ex:epdep}
We take as an example the Brownian motion case with~$\KK=\TT=[0,1]$. For any~$\ep>0$, $x\in\bR$ and~$t\in\KK$, we consider the following small-noise SDE
\begin{equation}\label{eq:SDE}
X^\ep(t) = x + \int_0^t b( X^\ep (s)) \ds + \ep \int_0^t \sigma(X^\ep(s)) \,\D W_s,
\end{equation}
where~$b$ and~$\sigma$ are real-valued functions such that~\eqref{eq:SDE} has a unique strong solution.
Therefore, there exists a measurable map~$\cG^\ep\colon C[0,1]\to C[0,1]$ such that~$\cG^\ep(\ep W) := X^\ep$.
This yields an expansion for~$X^\ep$ in the form of~\eqref{eq:Xep}, where~$\ep^n$ arises from~$\ep W$. We notice that, for~$\ep_1\neq\ep_2$, the distributions of~$X^{\ep_1}$ and~$X^{\ep_2}$ are mutually singular, which means the map~$\cG^\ep$ can depend explicitly on~$\ep$, hence so do the kernels~$\{f_n^{\ep,t}\}$.
\end{example}



\section{Proof of the main result} \label{sec:proof}
The proof of Theorem~\ref{thm:main} consists of three parts. In the first one, we adapt the sufficient conditions for an LDP of Gaussian functionals, originally derived in~\cite{BD01}, to our setup. In the second, we give meaning to some multiple stochastic integrals arising from the first part and finally, we verify said conditions. The game is afoot.


\subsection{The weak convergence approach}\label{subsec:wca}
As mentioned in the introduction, we rely upon the weak convergence approach to large deviations which is conducive for proving LDPs of functionals of infinite-dimensional Brownian motions~\cite{BD01,BD19}. 
In order to deduce abstract sufficient conditions for the LDP as in~\cite[Theorem 11.13]{BD19}, we need to cast our white noise process in this framework.
This entails first expressing~$W=\{W_t(E)\}_{t\in\bT,E\in\cE}$ as a functional of an infinite sequence of standard Brownian motions. Building on this connection,
we then recover the variational representation formula for functionals of white noise, analogue of~\cite[Theorem 11.11]{BD19} as cornerstone of the proof. 

We start by observing that~$\dot W$ (and~$W$)
is a random variable taking values in the space of signed measures on~$\TT$, which we denote by~$\cM(\TT)$. Now, consider the Hilbert space~$L^2(\bm E):=\{f\colon\bm E\to\bR: \int_{\bm E} f(x)^2 \mu(\D x) <\infty\}$ equipped with the usual inner product, and let~$\{\phi_i\}_{i\in\bN}$ be a complete orthonormal sequence. Let~$\beta:=\{\beta_i\}_{i\in\bN}$ defined for all~$t\in\bT$ by~$\beta_i(t):=\int_{[0,t]\times \bm E} \phi_i(x) W(\D s\,\D x)$, and note that it is a sequence of independent standard Brownian motions. Furthermore, for all~$E\in\cE$, we have
\[
W_t(E) = \sum_{i=1}^\infty \beta_i(t) \int_{E} \phi_i(x) \mu(\D x),
\]
where the series converges in~$L^2(\Omega)$.
The sequence~$\beta$ can be regarded as a random variable with values in~$C([0,T]\colon\bR^\infty)$, where~$\bR^\infty$ denotes the product space of countably infinite copies of~$\bR$.
Therefore the $\sigma$-fields generated by $\beta$ and~$W$ (and~$\dot W$) coincide and there exists a measurable function~$g: C(\bT\colon\bR^\infty)\to \cM(\TT)$ such that~$\dot W=g(\beta)$ almost surely.

We introduce the following spaces of deterministic and stochastic controls, and recall that~$\cA$ was defined by~\eqref{eq:cA}:
\begin{align*}
    \cS_M &:= \bigg\{ u\colon\TT\to\bR : \int_\TT u(t,x)^2 \,\widehat\mu(\D t \dx) \le M \bigg\}, \\
    \cA_M &:= \bigg\{ u\in\cA: u\in\cS_M, \;\bP\mathrm{-almost\; surely} \bigg\}, \qquad \mathrm{and}\qquad \cA_b := \bigcup_{M\in\bN} \cA_M,
\end{align*}
where $\cS_M$ is endowed with the weak topology on~$L^2(\TT)$, not to be confused with the weak convergence of measures. For all~$u\in\cA_b$, let us define $W^u = \big\{ W^u_t(E)\big\}_{t\in\bT, E\in\cE} $ by
\begin{equation}\label{eq:Wu}
W_t^u(E) := W_t(E) + \int_{[0,t]\times E} u(s,x) \,\widehat\mu(\D s \dx).
\end{equation}
We can now state the corresponding variational representation formula.
\begin{lemma}
For any bounded measurable map~$G\colon\cM(\TT)\to\bR$, the following holds:
\begin{equation}
    -\log \bE\left[ \mathrm{e}^{-G(W)}\right] = \inf_{u\in\cA_b} \bE\left[ \half \int_\TT u(t,x)^2 \,\widehat\mu(\D t \dx) + G(W^u) \right].
\end{equation}
\end{lemma}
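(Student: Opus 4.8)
The plan is to deduce this formula directly from its infinite-dimensional Brownian prototype by transporting everything through the measurable map $g$ for which $\dot W = g(\beta)$. Since $G\colon\cM(\TT)\to\bR$ is bounded and measurable, the composition $F:=G\circ g\colon C(\bT\colon\bR^\infty)\to\bR$ is again bounded and measurable, and $G(W)=F(\beta)$ almost surely. First I would invoke the variational representation for functionals of the $\bR^\infty$-valued Brownian motion $\beta$, namely~\cite[Theorem 11.11]{BD19}, which gives
\begin{equation*}
-\log\bE\big[\E^{-F(\beta)}\big] = \inf_{v}\bE\bigg[\half\int_\bT \norm{v(t)}_{\ell^2}^2\dt + F\Big(\beta + \textstyle\int_0^\cdot v(s)\ds\Big)\bigg],
\end{equation*}
the infimum running over the bounded $\ell^2$-valued $\{\cF_t\}$-progressively measurable controls $v=\{v_i\}_{i\in\bN}$. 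It then remains to rewrite the right-hand side in terms of the white-noise controls $u\in\cA_b$.

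The bridge between the two control sets is the orthonormal basis $\{\phi_i\}_{i\in\bN}$ of $L^2(\bm E)$. Given $u\in\cA_b$ I would set $v_i(t):=\int_{\bm E} u(t,x)\phi_i(x)\mu(\D x)$, the Fourier coefficients of $u(t,\cdot)$, so that $u(t,x)=\sum_{i} v_i(t)\phi_i(x)$ in $L^2(\bm E)$. Two identities must then be checked. For the energy, Fubini together with Parseval yields
\begin{equation*}
\int_\TT u(t,x)^2\,\widehat\mu(\D t\dx) = \int_\bT \norm{u(t,\cdot)}_{L^2(\bm E)}^2\dt = \int_\bT \norm{v(t)}_{\ell^2}^2\dt,
\end{equation*}
so the two quadratic costs agree. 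For the shift, integrating $\phi_i$ against the definition~\eqref{eq:Wu} of $W^u$ gives $\int_{[0,t]\times\bm E}\phi_i(x)W^u(\D s\dx) = \beta_i(t) + \int_0^t v_i(s)\ds$; conversely, expanding $\int_{[0,t]\times E} u\,\widehat\mu$ in the basis and using Parseval shows that the measure reconstructed from the shifted coordinates $\{\beta_i + \int_0^\cdot v_i\}$ is exactly $W^u$. Hence $W^u = g\big(\beta + \int_0^\cdot v\big)$ and therefore $G(W^u)=F(\beta + \int_0^\cdot v)$.

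Combining the two identities, each control $u\in\cA_b$ and its coordinate process $v$ produce the same value of the functional inside the two infima, and the correspondence $u\leftrightarrow v$ is a bijection between $\cA_b$ and the bounded $\ell^2$-valued progressively measurable controls that preserves both the energy and the relevant $\sigma$-fields: the filtrations of $W$ and $\beta$ coincide, so progressive measurability is transferred in both directions. The two infima therefore coincide, which is the claimed formula. The main obstacle I anticipate is not the representation itself, which is quoted, but the bookkeeping of this control dictionary: one must verify that $v$ inherits progressive measurability and the correct energy bound from $u$ (and vice versa), justify the interchange of summation and integration in the Parseval steps, and confirm that $g$ intertwines the white-noise shift by $u$ with the Cameron--Martin shift $\int_0^\cdot v$ at the level of the reconstructed measure rather than merely on the coordinates $\beta_i$.
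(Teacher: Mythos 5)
Your proposal is correct and takes essentially the same route as the paper, whose proof is a one-line deferral: it declares the argument ``identical to the proof of~\cite[Theorem 11.11]{BD19} after replacing the Brownian sheet with the present white noise,'' and that proof is exactly what you spell out --- invoke the variational representation for the coordinate Brownian motions~$\beta$, then transfer it to~$W$ through the basis dictionary $u\leftrightarrow v$, checking the Parseval energy identity and the shift identity $W^u = g\big(\beta+\int_0^\cdot v(s)\ds\big)$. The only slip is bibliographic: in~\cite{BD19}, Theorem~11.11 is the representation for the Brownian sheet (the result being imitated here), whereas the representation for the sequence~$\beta$ that you quote is a separate, earlier theorem in that chapter.
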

\begin{proof}
The proof, based on the corresponding formula for~$\beta$ and its relationship with~$W$, is identical to the proof of~\cite[Theorem 11.11]{BD19} after replacing the Brownian sheet with the present white noise.
\end{proof}
From this lemma arise abstract sufficient conditions yielding an LDP for functionals of white noise. The functionals take the form~$\cG^\ep(\ep W)$, where~$\cG^\ep\colon\cM(\TT)\to \cX$ is a measurable map and~$\cX$ is a Polish space ($\cX=C(\KK)$ in the case we are interested in). With the tools that we just described, one can translate~\cite[Condition 11.12 and Theorem 11.13]{BD19} into the white noise framework, as follows.
\begin{assumption}
There exists a measurable map~$\cG^0\colon L^2(\TT)\to\cX$ such that the following hold.
\begin{enumerate}[(a)]
    \item Consider~$M>0$ and a family~$\{u^\ep\}_{\ep>0}\subset\cA_M$ such that~$u^\ep$ converges in distribution to~$u$ as~$\ep$ goes to zero. Then the following convergence also holds in distribution
    \[
    \lim_{\ep\downarrow0}\, \cG^\ep \left( \ep W^{u^\ep/\ep} \right) 
    = \cG^0 (u).
    \]
    \item For every~$M>0$, the following sets are compact subsets of~$\cX$:
    \begin{equation*}
    \label{eq:Gamma}
    \Gamma_M := \Big\{ \cG^0 (u) : u\in\cS_M \Big\}.
    \end{equation*}
\end{enumerate}
\label{assu:abstract}
\end{assumption}
\begin{remark}
Since~$\cS_M$ is compact for every~$M>0$ in the weak topology, Assumption~\ref{assu:abstract}(b) is satisfied as soon as~$\cG^0$ is continuous.
\end{remark}
\begin{theorem}
Let Assumption~\ref{assu:abstract} hold. Then~$\{\cG^\ep \left( \ep W \right)\}_{\ep>0}$ satisfies an LDP with rate function~$\Lambda\colon\cX\to [0,+\infty]$ given by
\begin{equation*}
    \Lambda(\psi) = \inf \bigg\{ \half \norm{u}^2_{L^2(\TT)} : u\in L^2(\TT),\, \psi=\cG^0(u) \bigg\},
\end{equation*}
and~$\Lambda(\psi)=+\infty$ if this set is empty.
\label{thm:abstract}
\end{theorem}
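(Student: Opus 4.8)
The plan is to deduce the LDP from the equivalent Laplace principle~\eqref{eq:Laplace}: on the Polish space $\cX$ the two characterise the same good rate function (cf.~\cite[Sections~1.2--1.3]{DE97} and~\cite[Chapter~1]{BD19}), so it suffices to prove that for every bounded continuous $H\colon\cX\to\bR$ the log-Laplace functional on the left of~\eqref{eq:Laplace}, which I denote $\Lambda_\ep(H)$, converges to $\inf_{\psi\in\cX}\{\Lambda(\psi)+H(\psi)\}$. As a preliminary I would check that $\Lambda$ is a good rate function: its sublevel set $\{\psi\colon\Lambda(\psi)\le M\}$ coincides with $\Gamma_{2M}=\{\cG^0(u)\colon u\in\cS_{2M}\}$, which is compact by Assumption~\ref{assu:abstract}(b), and lower semicontinuity of $\Lambda$ then follows from the compactness of these sublevel sets.

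The engine of the proof is the variational representation of the preceding lemma. Applying it to the bounded measurable map $w\mapsto H(\cG^\ep(\ep w))$ on $\cM(\TT)$ (normalised by the speed) and performing the standard rescaling of the control so that the shifted noise $\ep W^{u}$ becomes the object $\ep W^{v/\ep}$ featuring in Assumption~\ref{assu:abstract}(a), I obtain the identity
\[
\Lambda_\ep(H) = \inf_{v\in\cA_b}\bE\bigg[\half\int_\TT v(t,x)^2\,\widehat\mu(\D t\dx) + H\big(\cG^\ep(\ep W^{v/\ep})\big)\bigg].
\]
Everything then reduces to computing the limit of this infimum, which I would do by establishing two matching Laplace bounds. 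Note first that $\inf_\psi\{\Lambda(\psi)+H(\psi)\}=\inf_{w\in L^2(\TT)}\{\half\norm{w}_{L^2(\TT)}^2+H(\cG^0(w))\}$, since $\Lambda$ is infinite off the range of $\cG^0$ and equals the stated infimum on it.

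For the upper bound $\limsup_{\ep\downarrow0}\Lambda_\ep(H)\le\inf_\psi\{\Lambda+H\}$, I fix $\eta>0$ and a \emph{deterministic} $u\in L^2(\TT)$ that is $\eta$-optimal for $\inf_{w}\{\half\norm{w}_{L^2(\TT)}^2+H(\cG^0(w))\}$. Using the constant control $v\equiv u\in\cA_b$ in the representation bounds $\Lambda_\ep(H)$ from above by $\half\norm{u}_{L^2(\TT)}^2+\bE[H(\cG^\ep(\ep W^{u/\ep}))]$; since $u$ is deterministic, Assumption~\ref{assu:abstract}(a) yields $\cG^\ep(\ep W^{u/\ep})\to\cG^0(u)$ in distribution, and boundedness and continuity of $H$ upgrade this to convergence of the expectations. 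Letting $\ep\downarrow0$ and then $\eta\downarrow0$ closes the bound.

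The lower bound $\liminf_{\ep\downarrow0}\Lambda_\ep(H)\ge\inf_\psi\{\Lambda+H\}$ is where I expect the real work. Choosing near-optimal controls $v^\ep\in\cA_b$ and using $\norminf{H}<\infty$ gives a uniform energy bound $\sup_\ep\bE\int_\TT(v^\ep)^2\,\widehat\mu<\infty$; a truncation argument then lets me assume $v^\ep\in\cA_M$ for a fixed $M$ at asymptotically negligible cost. Since $\cS_M$ is compact and metrizable in the weak topology, $\{v^\ep\}$ is tight, so along a subsequence $v^\ep\to v$ in distribution, and Assumption~\ref{assu:abstract}(a) provides the joint convergence $(v^\ep,\cG^\ep(\ep W^{v^\ep/\ep}))\to(v,\cG^0(v))$. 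Realising this convergence almost surely via the Skorokhod representation, the weak lower semicontinuity of $v\mapsto\int_\TT v^2\,\widehat\mu$ together with the continuity and boundedness of $H$ gives $\liminf_{\ep\downarrow0}\Lambda_\ep(H)\ge\bE[\half\int_\TT v^2\,\widehat\mu+H(\cG^0(v))]$, which is at least $\inf_\psi\{\Lambda+H\}$ by the pointwise-in-$\omega$ definition of $\Lambda$. The delicate points, hence the main obstacle, are the truncation that keeps the controls inside some $\cA_M$ (the representation ranges over all of $\cA_b$, with a priori unbounded energy), the joint tightness and identification of the limit afforded by part~(a), and the transfer of the weak lower semicontinuity of the $L^2$-energy through convergence in distribution; these are precisely the steps packaged in~\cite[Theorem~11.13]{BD19}, of which Theorem~\ref{thm:abstract} is the white-noise translation.
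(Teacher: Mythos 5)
Your proposal is correct and takes essentially the same route as the paper: the paper proves this theorem by translating the proof of~\cite[Theorem 11.13]{BD19} into the white-noise setting via the variational representation lemma, which is exactly the Laplace-principle argument you reconstruct (upper bound via near-optimal deterministic controls, lower bound via truncation into~$\cA_M$, tightness of controls in the weakly compact~$\cS_M$, and weak lower semicontinuity of the energy). The only imprecision is your claim that~$\{\Lambda\le M\}$ coincides with~$\Gamma_{2M}$: since Assumption~\ref{assu:abstract}(b) does not give weak continuity of~$\cG^0$, the infimum defining~$\Lambda$ need not be attained, so one should instead write~$\{\Lambda\le M\}=\bigcap_{\delta>0}\Gamma_{2(M+\delta)}$, a nested intersection of compact sets, which is still compact and yields the goodness of the rate function.
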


\subsection{Perturbing the chaos expansion} \label{subsec:integral}
We observe that, for each~$\ep>0$, there exist a measurable map~$\cG^\ep\colon\cM(\TT)\to C(\KK)$ such that, for all~$z\in\KK$,
\[
\cG^\ep(W)(z) =  \sum_{n=0}^\infty I_n(f_n^{\ep,z}),
\]
where the kernels come from~\eqref{eq:Xep}, and therefore we have~$X^\ep =\cG^\ep(\ep W)$ by linearity of~$I_n$.
Our objective is to apply the abstract Theorem~\ref{thm:abstract} to the map~$\cG^\ep$ defined above, hence we must study the controlled processes~$X^{\ep,u} := \cG^\ep \left(\ep W^{u/\ep} \right)$, for~$u\in\cA_b$.
Perturbing the chaos expansion leads us to revisit some notions of integration with respect to white noise.

Let~$n\in\bN$. We recall how the multiple integral operator~$I_n$ is defined, following~\cite[Section 1.1.2]{Nualart06}. Let us call~$f\in L^2(\TT^n)$ an elementary function if it has the form
\begin{equation}\label{eq:eltry}
f(\ttn,\xxn) = \sum_{i_1,\cdots,i_n =1}^N a_{i_1 \cdots i_n} \mathbbm{1}_{A_{i_1}\times\cdots\times A_{i_n}}(\ttn,\xxn),
\end{equation}
where~$N\ge n$, $A_1,\cdots, A_N \in\cT$ are pairwise disjoint sets with finite measure, and the real coefficients~$a_{i_1,\cdots,i_n}$ are zero if and only if two of the indices~$i_1,\cdots,i_n$ are equal. For such a function, the integral is defined as
\begin{equation*}
I_n(f) = \sum_{i_1,\cdots,i_n =1}^N a_{i_1\cdots i_n} \dot W(A_{i_1})\times\cdots\times \dot W(A_{i_n}).
\end{equation*}
Let~$u\in\cA_b$, and let the shifted integral read 
\begin{equation}\label{eq:Imuelementary}
I_n^u(f) := \sum_{i_1,\cdots,i_n =1}^N a_{i_1 \cdots i_n} \dot W^u(A_{i_1}) \times\cdots\times \dot W^u(A_{i_n}),
\end{equation}
where, similarly to~\eqref{eq:Wu},
$\dot W^{u}(A) := \dot W(A) + \int_A u(t,x) \,\widehat \mu(\D t\dx)$ for all~$A\in\cT$, and such that~$\dot W^u$ is also~$\cM(\TT)$-valued. 

Let us define the random measure~$ \nu\colon A \to \int_{ A} u(t,x) \,\widehat \mu(\D t\dx)$, for~$A\in\cT$. In order to study~\eqref{eq:Imuelementary} and extend it to the whole space~$L^2(\TT^n)$, we develop~$\dot W^u(A_{i_1}) \times\cdots\times \dot W^u(A_{i_n})$ in full, which is suggestive of the binomial formula for~$(x+y)^n$.
We obtain the sum of~$2^n$ terms, each of them being the product of all the elements of an $n$-tuple. We can regard these~$n$-tuples as words of length~$n$ over an alphabet made of two distinct elements: the measures~$\nu$ and~$\dot W$. Moreover the order of these letters, encoded by the sets~$\{A_{i_j}\}_{j=1}^n$, matters.
Nevertheless, the~$2^n$ $n$-tuples can be classified in the following way.
For each~$k\in\llbracket0,n \rrbracket$ there are~$\binom{n}{k}$ terms made of exactly~$k$ times~$\nu$ and~$n-k$ times~$\dot W$; therefore let~$\Theta(k,n)$ be the set of such~$n$-tuples. An element~$\theta$ of~$\Theta(k,n)$ is thus a sequence made of~$\nu$ and~$\dot W$ and we can write, for a fixed sequence~$(i_1,\cdots,i_n)$:
\[
\prod_{j=1}^n \dot W^u(A_{i_j}) = \sum_{k=0}^n \sum_{\theta\in \Theta(k,n)} \prod_{j=1}^n \theta_j(A_{i_j}).
\]
For an elementary function~$f$ of the form~\eqref{eq:eltry} and a given~$n$-tuple~$\theta\in \Theta(k,n)$, we define
\[
m_\theta(f) := \sum_{i_1,\cdots,i_n =1}^N a_{i_1 \cdots i_n} \prod_{j=1}^n \theta_j(A_{i_j}).
\]
It is well-known that~$I_n$ can be extended to a linear map from~$ L^2(\TT^n)$ to~$L^2(\Omega)$. The purpose of the following lemma is to extend the maps~$m_\theta$ and~$I_n^u$ in a similar way and to identify this extension as a multiple integral.
\begin{lemma}\label{lemma:integration}
For all~$p\ge2$, $M>0,\,u\in\cA_M$ and~$n\ge0$, the following hold.
\begin{enumerate}
    \item For all~$0\le k\le n$ and~$\theta\in \Theta(k,n)$, the map~$m_\theta$ can be extended to a linear and continuous map from~$L^2(\TT^n)$ to~$L^p(\Omega)$
    with
    \begin{equation}\label{eq:boundmpf}
    \norm{m_\theta(f)}_{L^p(\Omega)} \le \sqrt{(n-k)!} \,M^{k/2} (p-1)^\frac{n-k}{2} \norm{f}_{L^2(\TT^n)}.
    \end{equation}
    \item The same extension holds for~$I^u_n$ with the bound
    \begin{equation}\label{eq:boundInu}
    \norm{I_n^u(f)}_{L^p(\Omega)} \le \sqrt{n!} \Big(4(M+1) (p-1)\Big)^{n/2} \norm{f}_{L^2(\TT^n)}.
    \end{equation}
    \item If~$\theta\in\Theta(n,n)$ and~$f\in L^2(\TT^n)$, then~$m_\theta(f)$ is also a multiple stochastic integral over~$\TT^n$ where integration against~$\nu$ takes place in the Lebesgue sense, almost surely. 
    Furthermore, we have~$m_\theta(f) = J^u_n(f)$, almost surely, where~$J^u_n$ is defined in~\eqref{eq:Jnu}.
\end{enumerate}
\end{lemma}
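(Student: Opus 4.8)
The plan is to prove all three claims first for elementary kernels of the form~\eqref{eq:eltry}, where $m_\theta(f)$ and $I_n^u(f)$ are finite sums, and then extend to all of~$L^2(\TT^n)$ by density, exactly as~$I_n$ itself is extended. Since elementary functions are dense in~$L^2(\TT^n)$ and the bounds~\eqref{eq:boundmpf}--\eqref{eq:boundInu} (in particular at~$p=2$) exhibit~$m_\theta$ and~$I_n^u$ as Lipschitz maps from~$L^2(\TT^n)$ into~$L^p(\Omega)$, the extension is unique and consistent across~$p$ (the $L^p$-limits of an approximating sequence agree almost surely). For part~(1), fix an elementary~$f$, $0\le k\le n$ and~$\theta\in\Theta(k,n)$. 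Because~$f$ is symmetric, relabelling the summation indices shows that~$m_\theta(f)$ depends on~$\theta$ only through~$k$, so I may assume the~$k$ copies of~$\nu$ occupy the first~$k$ slots. Writing~$\mathbf s=(s_1,\dots,s_k)$ for the first~$k$ variables and integrating them against~$\nu$ (that is, against~$u\,\widehat\mu$), a direct computation on~\eqref{eq:eltry} gives the iterated representation
\[
m_\theta(f) = \int_{\TT^k} I_{n-k}\big(f(\mathbf s,\cdot)\big)\, u(s_1)\cdots u(s_k)\,\widehat\mu^{\,\otimes k}(\D\mathbf s),
\]
where, for a.e. fixed~$\mathbf s$, the kernel~$f(\mathbf s,\cdot)\in L^2(\TT^{n-k})$ is \emph{deterministic}, so that~$G(\mathbf s):=I_{n-k}(f(\mathbf s,\cdot))$ is a genuine multiple Wiener--It\^o integral. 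This is the crux: the randomness carried by~$u$ sits entirely in the outer integral, while the inner object has a deterministic kernel and is thus amenable to hypercontractivity.

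I would then bound~$\norm{m_\theta(f)}_{L^p(\Omega)}$ in three moves. First, a pathwise Cauchy--Schwarz inequality in~$\mathbf s$ together with the~$\bP$-a.s.\ bound~$\norm{u}_{L^2(\TT)}^2\le M$ (valid since~$u\in\cA_M$) gives, almost surely,
\[
\abs{m_\theta(f)} \le M^{k/2}\bigg(\int_{\TT^k} G(\mathbf s)^2\,\widehat\mu^{\,\otimes k}(\D\mathbf s)\bigg)^{1/2}.
\]
Second, Minkowski's integral inequality in~$L^{p/2}(\Omega)$ (note~$p/2\ge1$) yields
\[
\Big\| \int_{\TT^k} G(\mathbf s)^2\,\widehat\mu^{\,\otimes k}(\D\mathbf s)\Big\|_{L^{p/2}(\Omega)} \le \int_{\TT^k}\norm{G(\mathbf s)}_{L^p(\Omega)}^2\,\widehat\mu^{\,\otimes k}(\D\mathbf s).
\]
Third, Gaussian hypercontractivity applied to the deterministic-kernel integral~$G(\mathbf s)$, together with the isometry~\eqref{eq:Inidentity}, gives~$\norm{G(\mathbf s)}_{L^p(\Omega)}\le (p-1)^{(n-k)/2}\sqrt{(n-k)!}\,\norm{f(\mathbf s,\cdot)}_{L^2(\TT^{n-k})}$; integrating its square over~$\mathbf s$ produces~$\norm{f}_{L^2(\TT^n)}^2$ by Fubini. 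Chaining the three estimates yields exactly~\eqref{eq:boundmpf}.

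Part~(2) follows by summation. Expanding~\eqref{eq:Imuelementary} gives~$I_n^u(f)=\sum_{k=0}^n\sum_{\theta\in\Theta(k,n)}m_\theta(f)$, and since~$\abs{\Theta(k,n)}=\binom{n}{k}$, the triangle inequality and~\eqref{eq:boundmpf} give
\[
\frac{\norm{I_n^u(f)}_{L^p(\Omega)}}{\norm{f}_{L^2(\TT^n)}} \le \sum_{k=0}^n\binom{n}{k}\sqrt{(n-k)!}\,M^{k/2}(p-1)^{(n-k)/2}.
\]
Using the crude bound~$\sqrt{(n-k)!}\le\sqrt{n!}$ and the binomial theorem, the right-hand side is at most~$\sqrt{n!}\,(\sqrt M+\sqrt{p-1})^n$, and the elementary inequality~$\sqrt M+\sqrt{p-1}\le 2\sqrt{(M+1)(p-1)}$ for~$p\ge2$ (so~$p-1\ge1$) delivers~\eqref{eq:boundInu}. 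Part~(3) is the case~$k=n$, where~$\theta\in\Theta(n,n)$ consists only of copies of~$\nu$. For elementary~$f$,
\[
m_\theta(f)=\sum_{i_1,\dots,i_n=1}^N a_{i_1 \cdots i_n}\prod_{l=1}^n \nu(A_{i_l}) = \int_{\TT^n} f(\ttn,\xxn)\prod_{l=1}^n u(t_l,x_l)\,\widehat\mu(\D t_l\dx_l) = J_n^u(f),
\]
which is a pathwise ($\widehat\mu$-weighted, Lebesgue) multiple integral. Since part~(1) with~$k=n$ shows~$m_\theta$ is continuous on~$L^2(\TT^n)$, and~$J_n^u$ is continuous there by the a.s.\ bound~$\abs{J_n^u(f)}\le M^{n/2}\norm{f}_{L^2(\TT^n)}$, the identity~$m_\theta(f)=J_n^u(f)$ persists for all~$f\in L^2(\TT^n)$ by density.

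The main obstacle is precisely what the representation in part~(1) circumvents: the control~$u$ is random and~$\{\cF_t\}$-progressively measurable, so~$\dot W$ and~$u$ are not independent, and one cannot naively condition on~$\nu$ and treat the surviving Gaussian factors as a chaos with frozen coefficients. Relegating~$u$ to the outer integration while keeping a deterministic kernel inside~$G(\mathbf s)$ is what decouples the two and restores access to hypercontractivity; once this is in place, the rest is Cauchy--Schwarz, Minkowski's inequality and combinatorial bookkeeping.
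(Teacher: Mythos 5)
Your proposal is correct, and its skeleton matches the paper's: work with elementary functions, use a pathwise Cauchy--Schwarz inequality against $\nu=u\,\widehat\mu$ (with the a.s.\ bound $\norm{u}_{L^2(\TT)}^2\le M$) to extract $M^{k/2}$, apply hypercontractivity together with the isometry~\eqref{eq:Inidentity} to the remaining Gaussian part, and extend by density; parts (2) and (3) are essentially the paper's arguments with trivially different bookkeeping (your binomial theorem plus $\sqrt M+\sqrt{p-1}\le 2\sqrt{(M+1)(p-1)}$ versus the paper's crude $\sum_k\binom{n}{k}=2^n$ bound, and the same elementary-identification-plus-density step for $m_\theta=J_n^u$). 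The genuine difference is how the key estimate in part (1) is assembled. The paper dominates $m_\theta(f)$ almost surely by $M^{k/2}I_{n-k}(h)$, where $h$ is the \emph{deterministic} elementary kernel built from $\sqrt{\Psi}$ (equivalently $\norm{f(\bm{t_{n-k}},\bm{x_{n-k}},\,\bullet\,)}_{L^2(\TT^k)}$), and then applies hypercontractivity once to $I_{n-k}(h)$; as written, that a.s.\ inequality is loose about signs, since the Cauchy--Schwarz bound on the $\nu$-coefficients gets multiplied by products $\prod_j \dot W(A_{i_j})$ of arbitrary sign. You instead keep the $k$-fold $\nu$-integration on the outside, so that for each frozen $\mathbf s$ the inner object $G(\mathbf s)=I_{n-k}(f(\mathbf s,\cdot))$ has a deterministic kernel, and you commute norm and integral via Minkowski's integral inequality in $L^{p/2}(\Omega)$ before invoking hypercontractivity pointwise in $\mathbf s$. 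This is the airtight rendering of the same idea: it resolves the dependence between the progressively measurable $u$ and $\dot W$ exactly as the paper intends, but without any pointwise comparison of signed random variables, and it lands on the identical constant in~\eqref{eq:boundmpf}. One small blemish: your ``without loss of generality'' for placing the $\nu$-slots first should not be justified by symmetry of $f$ (elementary functions of the form~\eqref{eq:eltry} need not be symmetric, and the lemma is stated for all of $L^2(\TT^n)$); the correct justification, which the paper uses, is simply that the computation and the resulting bound are invariant under which $k$ of the $n$ variables carry $\nu$.
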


\begin{remark}
The bound~\eqref{eq:boundmpf} does not depend on~$\theta$ but only on~$k$ and~$n$.
\end{remark}

\begin{remark}\label{rem:IntegralInterpretation}
Like~$I_n$ in~\eqref{eq:Indef}, we can interpret~$m_\theta$ as an~$L^2(\Omega)$-multiple integral in the following sense:
    \begin{equation}\label{eq:mthetaintegral}
    m_\theta(f) = \int_{\TT^n} f(\ttn,\xxn) \, \nu(\D t_1 \, \D x_1)\cdots \nu(\D t_k \, \D x_k) W(\D t_{k+1} \,\D x_{k+1}) \cdots W(\D t_n\, \D x_n).
    \end{equation}
Likewise, we can interpret~$I_n^u$ as an~$L^2(\Omega)$-multiple integral in the following sense:
    \[
    I_n^{u} (f) = \int_{\TT^n} f_n(\ttn,\xxn) \, W^{u}(\D t_1 \dx_1) \cdots W^{u}(\D t_n \dx_n).
    \]
\end{remark}

\begin{proof}
Let us fix~$p\ge2$, $M>0,\,u\in\cA_M$ and~$n\ge0$.
\begin{enumerate}
    \item 
    Let~$0\le k\le n$ and~$\theta\in \Theta(k,n)$.   
    The map~$I_n$ is linear for elementary functions and therefore so is~$m_\theta$ since they are designed in an identical way.
    
    For all~$A\in\cT$ with finite measure, one has~$\nu(A) = \int_\TT \one_{A} \,\nu(\D t\dx) = \int_\TT \one_{A} \, u(t,x) \, \widehat\mu(\D t \dx)$. Without loss of generality, and for more clarity, assume~$\theta_j=\nu$ for all~$l\in\llbracket 1,k \rrbracket$ and~$\theta_j=\dot W$ for all~$l\in\llbracket k+1,n \rrbracket$. Otherwise, just split the~$i_j$'s in a different way in the computation below. Let~$f$ be an elementary function in~$L^2(\TT^n)$, then we have:
    \begin{align*}
     \hspace{-1.6cm}   m_\theta(f) 
        & = \sum_{i_{k+1},\cdots,i_n=1}^N\, \sum_{i_1,\cdots,i_k=1}^N a_{i_1 \cdots i_n} \left(\prod_{j=1}^k \int_\TT \mathbbm{1}_{A_{i_j}}(t,x) \nu(\D t \dx) \right) \left(\prod_{j=k+1}^n \dot W(A_{i_j})\right) \\
        & = \sum_{i_{k+1},\cdots,i_n=1}^N\,  \left( \int_{\TT^k} \sum_{i_1,\cdots,i_k=1}^N a_{i_1 \cdots i_n} \left(\prod_{j=1}^k \mathbbm{1}_{A_{i_j}}(t_j,x_j) \right) \, \nu^{\otimes k}(\D \ttk \D \xxk) \right) \left(\prod_{j=k+1}^n \dot W(A_{i_j})\right) \\
        & \le  \sum_{i_{k+1},\cdots,i_n=1}^N\,  M^{k/2} 
        \left( \int_{\TT^k} \left( \sum_{i_1,\cdots,i_k=1}^N a_{i_1 \cdots i_n} \prod_{j=1}^k \mathbbm{1}_{A_{i_j}}(t_j,x_j) \right)^2 \, \widehat\mu^{\,\otimes k}(\D \ttk \D \xxk) \right)^\half \left(\prod_{j=k+1}^n \dot W(A_{i_j})\right),
    \end{align*}
    almost surely, where we used the linearity of the integral, Cauchy--Schwarz inequality~$k$~times and the fact that~$u\in\cA_M$, which yields an almost sure bound for its $L^2(\TT)$-norm. Now we observe that, since~$A_i$'s are disjoint, the cross terms vanish:
    \begin{align*}
    \Psi &:= \int_{\TT^k} \left( \sum_{i_1,\cdots,i_k=1}^N a_{i_1 \cdots i_n} \prod_{j=1}^k \mathbbm{1}_{A_{i_j}}(t_j,x_j) \right)^2 \, \widehat\mu^{\,\otimes k}(\D\xxk \D \ttk) \\
    & =  \int_{\TT^k} \sum_{i_1,\cdots,i_k=1}^N \left( a_{i_1 \cdots i_n}\right)^2 \left(\prod_{j=1}^k \mathbbm{1}_{A_{i_j}}(t_j,x_j) \right) \, \widehat\mu^{\,\otimes k}(\D\xxk \D \ttk) \\
    & = \sum_{i_1,\cdots,i_k=1}^N \left( a_{i_1 \cdots i_n}\right)^2 \prod_{j=1}^k \widehat\mu(A_{i_j}).
    \end{align*}
    We observe that, for any fixed~$(i_1,\cdots,i_k)$,
    \begin{equation}\label{eq:h}
    h:=\sum_{i_{k+1},\cdots,i_n=1}^N\sqrt{\Psi}\prod_{j=k+1}^n \one_{A_{i_j}} 
    \end{equation}
    is an elementary function in~$L^2(\TT^{n-k})$, and therefore~$m_\theta(f) \le M^{k/2} I_{n-k}(h)$, almost surely.
    We then recover an element of the~$(n-k)$th Wiener chaos and appeal to the hypercontractivity property~\cite[Theorem 1.4.1]{Nualart06}. It implies that~$L^p(\Omega)$ norms are equivalent in a given Wiener chaos for all~$p\ge2$ and, in particular, that for all~$p\ge2$:
\[
\norm{I_{n-k}(h)}_{L^p(\Omega)} \le (p-1)^\frac{n-k}{2} \norm{I_{n-k}(h)}_{L^2(\Omega)} = (p-1)^\frac{n-k}{2} \sqrt{(n-k)!} \norm{h}_{L^2(\TT^{n-k})},
\]
where we also used~\eqref{eq:Inidentity}. Once more, the cross terms vanish to give
\[
\norm{h}_{L^2(\TT^{n-k})}^2 = \sum_{i_1,\cdots,i_n=1}^N \left( a_{i_1 \cdots i_n}\right)^2 \prod_{j=1}^n \widehat\mu(A_{i_j})
=\norm{f}_{L^2(\TT^{n})}^2,
\]
which yields the desired bound and thus the continuity of~$m_\theta$ for elementary functions. Moreover, the space of elementary functions is dense in~$L^2(\TT^n)$ and therefore~$m_\theta$ and its bound extend as claimed~\cite[Section 1.1.2]{Nualart06}.
\item Let~$f$ be an elementary function. This is a corollary of the previous item which follows by noticing that
\[
I^u_n (f) = \sum_{k=0}^n \sum_{\theta\in \Theta(k,n)} m_\theta(f).
\]
Then, bounding~$k$ and~$n-k$ by~$n$ and and using the identity~$\sum_{k=0}^n \binom{n}{k} = 2^n$, this yields
\begin{align*}
\norm{I^u_n(f)}_{L^p(\Omega)} &\le \sum_{k=0}^n \binom{n}{k} \sqrt{(n-k)!} \,M^{k/2} (p-1)^\frac{n-k}{2} \norm{f}_{L^2(\TT^n)}\\
&\le \sqrt{n!} \Big(4(M+1) (p-1)\Big)^{n/2} \norm{f}_{L^2(\TT^n)},
\end{align*}
as desired.
    \item Let~$f\in L^2(\TT^n)$ and a sequence of elementary functions~$\{f^l\}_{l\in\bN}$ tending to~$f$ in~$L^2(\TT^n)$. Let~$\theta\in\Theta(n,n)$, from~\eqref{eq:boundmpf}, we know that $m_\theta(f-f^l)$ converges to zero in~$L^2(\Omega)$. 
    Moreover, for every subsequence of~$\{m_\theta(f-f^l)\}_{l\in\bN}$, there exists a further subsubsequence that tends to zero almost surely. Therefore, the original sequence converges to zero almost surely, which implies that integration against~$\nu$ takes place in the Lebesgue sense, almost surely.
    
    Therefore, it is clear that, almost surely, we have
    \begin{align*}
    m_\theta(f) 
    &= \int_{\TT^n} f(\ttn,\xxn) \, \nu(\D t_1 \dx_1)  \cdots\nu(\D t_n \dx_n)\\
    &= \int_{\TT^n} f(\ttn,\xxn) \, \big( u(t_1,x_1) \,\widehat\mu(\D t_1 \dx_1) \big) \cdots \big( u(t_n,x_n) \,\widehat\mu(\D t_n \dx_n) \big)
    = J^u_n(f),
    \end{align*}
    as claimed.\qedhere
\end{enumerate}
\end{proof}
\begin{remark}
We observe that~$h$, defined in~\eqref{eq:h}, is equal to~$\norm{f(\bm{t_{n-k}},\bm{x_{n-k}},\,\bullet\,)}_{L^2(\TT^k)}$.
\end{remark}

\begin{remark}
It is natural to wonder whether representing~$m_\theta$ in the sense of Walsh~\cite{Walsh86,DKMNX08} would yield similar bounds. Because of the necessity of adaptedness, we would have to write~\eqref{eq:mthetaintegral} as:
\begin{equation*}
m_\theta(f_n) = n! \int_\TT \int_{[0,t_n]\times\bm E} \cdots \int_{[0,t_2]\times\bm E} f(\ttn,\xxn) \, \nu(\D t_1 \, \D x_1)\cdots \nu(\D t_k \, \D x_k) W(\D t_{k+1} \,\D x_{k+1}) \cdots W(\D t_n\, \D x_n),
\end{equation*}
with~$\theta\in\Theta(k,n)$ and~$f\in L^2(\TT^n)$. Applying Cauchy--Schwarz inequality~$k$ times yields
\begin{align*}
\bE \big[m_\theta(f)^2 \big] 
& \le (n!)^2 M^{k} \bE \left[\left( \int_\TT \int_{[0,t_n]\times\bm E} \cdots \int_{[0,t_{k+1}]\times\bm E} \norm{f}_{L^2(\TT^{n-k})} W(\D t_{k+1} \,\D x_{k+1}) \cdots W(\D t_n\, \D x_n) \right)^2 \right] \\
& = \left(\frac{n!}{(n-k)!}\right)^2 M^k \norm{f}_{L^2(\TT^{n})},
\end{align*}
which is not as sharp as~\eqref{eq:boundmpf}.
\end{remark}


\subsection{Proof of convergence}

Now that we have set the stage, the game consists in verifying that Assumption~\ref{assu:abstract} holds and then applying Theorem~\ref{thm:abstract}. 
In this subsection we will then study, for all~$z\in\KK$,
\[
X^{\ep,u}(z) =  \sum_{n=0}^\infty I_n^{\ep,u} (f_n^{\ep,z}),
\]
where~$I^{\ep,u}_0 =I_0$ and for all~$n\ge1$, $f_n\in L^2(\TT^n)$,
\begin{equation}
    I_n^{\ep,u} (f_n) = \int_{\TT^n} f_n(\ttn,\xxn) \;\ep W^{u/\ep}(\D t_1 \dx_1) \cdots \ep W^{u/\ep}(\D t_n \dx_n),
\label{eq:Inepu}
\end{equation}
which meaning flows from Lemma~\ref{lemma:integration} and Remark~\ref{rem:IntegralInterpretation}. Note that in this subsection, the $n$-tuples~$\theta$ are sequences made of the measures~$\ep\dot W$ (instead of~$\dot W$) and~$\nu$, but we retain the same notations for simplicity. Hence, for~$f\in L^2(\TT^n),\,\theta\in\Theta(k,n)$ and~$u\in\cA_M$, the linearity of~$m_\theta$ entails that the moment bound~\eqref{eq:boundmpf} becomes
\begin{equation}\label{eq:boundmep}
\norm{m_\theta(f)}_{L^p(\Omega)} \le \ep^{n-k} \sqrt{(n-k)!} \,M^{k/2} (p-1)^\frac{n-k}{2} \norm{f}_{L^2(\TT^n)}.
\end{equation}
We start by deriving a moment bound for~$X^{\ep,u}(z)$ in Lemma~\ref{lemma:bound}, then the tightness of~$\{ X^{\ep,u^\ep}\}_{\ep>0}$ in~$C(\KK)$ in Lemma~\ref{lemma:tightness}, which, combined with the
convergence of the marginals proved in Lemma~\ref{lemma:cvg}, yields the weak convergence in~$C(\KK)$. Finally we show the compactness of the sets~$\Gamma_M$, for all~$M>0$, in Lemma~\ref{lemma:goodness}.

\begin{lemma}[Moment bound]
Let Assumption~\ref{assu:main}(B) hold.
For all~$p\ge2,\,u\in\cA_M,M>0$ and~$\ep>0$ small enough, we have:~$\sup_{z\in\KK} \norm{X^{\ep,u}(z)}_{L^p(\Omega)} <\infty.$
\label{lemma:bound}
\end{lemma}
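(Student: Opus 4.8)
\emph{Plan.} The strategy is to pull the $L^p(\Omega)$-norm inside the chaos expansion and bound each summand by its $L^2(\TT^n)$-kernel norm, reducing the whole statement to the convergence of the series in Assumption~\ref{assu:main}(B). Since $X^{\ep,u}(z)=\sum_{n\ge0}I_n^{\ep,u}(f_n^{\ep,z})$ with the integrals $I_n^{\ep,u}$ given by~\eqref{eq:Inepu}, Minkowski's inequality in $L^p(\Omega)$ gives
\[
\norm{X^{\ep,u}(z)}_{L^p(\Omega)}\ \le\ \sum_{n=0}^\infty\norm{I_n^{\ep,u}(f_n^{\ep,z})}_{L^p(\Omega)},
\]
so everything hinges on a per-term bound of the form $\norm{I_n^{\ep,u}(f_n^{\ep,z})}_{L^p(\Omega)}\le\sqrt{n!}\,\kappa^n\norm{f_n^{\ep,z}}_{L^2(\TT^n)}$ with a constant $\kappa=\kappa(M,p)$ independent of both $n$ and $z$.

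\emph{Per-term estimate.} To obtain it I would run the computation of Lemma~\ref{lemma:integration}(2) with the $\ep$-scaled moment bound~\eqref{eq:boundmep} in place of~\eqref{eq:boundmpf}. Writing $I_n^{\ep,u}=\sum_{k=0}^n\sum_{\theta\in\Theta(k,n)}m_\theta$ and using that $\Theta(k,n)$ has $\binom{n}{k}$ elements, one gets
\[
\norm{I_n^{\ep,u}(f_n^{\ep,z})}_{L^p(\Omega)}\ \le\ \sum_{k=0}^n\binom{n}{k}\,\ep^{\,n-k}\sqrt{(n-k)!}\,M^{k/2}(p-1)^{\frac{n-k}{2}}\,\norm{f_n^{\ep,z}}_{L^2(\TT^n)}.
\]
For $\ep$ small enough (in particular $\ep\le1$) the factor $\ep^{\,n-k}$ is bounded by $1$, so the right-hand side is dominated termwise by exactly the sum already controlled in the proof of~\eqref{eq:boundInu} (the only combinatorial input being $\binom{n}{k}\sqrt{(n-k)!}\le\sqrt{n!\,\binom{n}{k}}$ together with $\sum_k\binom{n}{k}=2^n$). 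This yields the clean $\ep$-free per-term bound $\norm{I_n^{\ep,u}(f_n^{\ep,z})}_{L^p(\Omega)}\le\sqrt{n!}\,\kappa^n\norm{f_n^{\ep,z}}_{L^2(\TT^n)}$ with $\kappa:=2\sqrt{(M+1)(p-1)}$.

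\emph{Summation and uniformity.} Feeding the per-term bound into Minkowski's inequality gives $\norm{X^{\ep,u}(z)}_{L^p(\Omega)}\le g^\ep(z)$, where $g^\ep(z):=\sum_{n\ge0}\sqrt{n!}\,\kappa^n\norm{f_n^{\ep,z}}_{L^2(\TT^n)}$ is finite for each fixed $z$ by Assumption~\ref{assu:main}(B) applied with this $\kappa$. The remaining and, in my view, only genuinely delicate point is to upgrade this pointwise finiteness to the uniform bound $\sup_{z\in\KK}g^\ep(z)<\infty$ demanded by the statement, since (B) controls each $g^\ep(z)$ but offers no uniformity in $z$. I would obtain it from the equicontinuity encoded in Assumption~\ref{assu:main}(C): by the reverse triangle inequality for the $L^2(\TT^n)$-norms,
\[
\abs{g^\ep(z)-g^\ep(y)}\ \le\ \sum_{n=0}^\infty\sqrt{n!}\,\kappa^n\norm{f_n^{\ep,z}-f_n^{\ep,y}}_{L^2(\TT^n)}\ \le\ \omega(\norm{z-y}),
\]
so $g^\ep$ is uniformly continuous on $\KK$; since $\KK$ is compact, $\sup_{z\in\KK}g^\ep(z)\le g^\ep(z_0)+\omega(\mathrm{diam}(\KK))<\infty$ for any fixed $z_0\in\KK$, which closes the argument. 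I expect the combinatorial collapse to be routine, and the passage from pointwise to uniform control over $\KK$ to be the step that actually requires the full strength of Assumption~\ref{assu:main} rather than~(B) alone.
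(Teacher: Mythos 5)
Your proposal follows the paper's own argument almost verbatim in its core: the decomposition $I_n^{\ep,u}(f_n^{\ep,z})=\sum_{k=0}^n\sum_{\theta\in\Theta(k,n)}m_\theta(f_n^{\ep,z})$, the moment bound~\eqref{eq:boundmep} for each $m_\theta$, the combinatorial collapse (bounding $\ep$ by one, $k$ and $n-k$ by $n$, and using $\sum_{k}\binom{n}{k}=2^n$) to the per-term estimate $\sqrt{n!}\,\big(4(M+1)(p-1)\big)^{n/2}\norm{f_n^{\ep,z}}_{L^2(\TT^n)}$, and the triangle (Minkowski) inequality over the expansion are exactly the steps of the paper's proof of Lemma~\ref{lemma:bound}.

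The one place you diverge is the final uniformity step. The paper stops after the fixed-$z$ computation and declares the resulting series ``finite by Assumption~\ref{assu:main}(B)'', i.e.\ it treats (B) as supplying a bound that is uniform over the compact $\KK$ — consistent with the exponential-type condition~\eqref{eq:expotype} of Remark~\ref{rk:assu}(B), where $\Delta$ does not depend on $z$. You instead read (B) literally as a pointwise-in-$z$ statement, observe (correctly) that pointwise finiteness of $g^\ep(z)$ does not by itself give $\sup_{z\in\KK}g^\ep(z)<\infty$, and close this by deriving $\abs{g^\ep(z)-g^\ep(y)}\le\omega(\norm{z-y})$ from~\eqref{eq:equicont} and invoking compactness of $\KK$; that estimate does follow from Assumption (C), so the step is sound. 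The point to flag is that the lemma as stated assumes \emph{only} (B), so your proof establishes the conclusion under strictly stronger hypotheses than advertised. Within the paper this is harmless — wherever Lemma~\ref{lemma:bound} is used (Lemmata~\ref{lemma:tightness} and~\ref{lemma:goodness}), Assumption (C) is also in force — but to obtain the lemma exactly as stated one must either read (B) as $z$-uniform, as the paper implicitly does, or accept that under the strictly pointwise reading the $\sup_{z\in\KK}$ claim is not derivable from (B) alone. In short: your core argument coincides with the paper's; your appendix on uniformity is a legitimate patch for a looseness in the paper's own proof, at the price of importing a hypothesis the lemma does not grant.
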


\begin{proof}
Let us fix~$\ep>0,\, u\in\cA_M,\,M>0$ and~$z\in\KK$, and recall the sequence of kernels~$\{f_n^{\ep,z}\}_{n\ge0}$ from~\eqref{eq:Xep}. As in the proof of Lemma~\ref{lemma:integration}, the representation
\[
I^{\ep,u}_n (f_n^{\ep,z}) = \sum_{k=0}^n \sum_{\theta\in \Theta(k,n)} m_\theta(f_n^{\ep,z}),
\]
combined with~\eqref{eq:boundmep}, yields
\begin{align*}
\norm{I^{\ep,u}_n (f_n^{\ep,z})}_{L^p(\Omega)} 
&\le \sum_{k=0}^n \sum_{\theta\in \Theta(k,n)} \norm{m_\theta(f_n^{\ep,z})}_{L^p(\Omega)}\\
&\le \sum_{k=0}^n \binom{n}{k} \ep^{n-k} \sqrt{(n-k)!} \,M^{k/2} (p-1)^\frac{n-k}{2} \norm{f_n^{\ep,z}}_{L^2(\TT^n)}.
\end{align*}
Hence, bounding~$k$ and~$n-k$ by~$n$, $\ep$ by one and using the identity~$\sum_{k=0}^n \binom{n}{k} = 2^n$, we conclude that 
\begin{align*}
    \norm{X^{\ep,u}(z)}_{L^p(\Omega)} 
    \le \sum_{n=0}^\infty \norm{I^{\ep,u}_n (f_n^{\ep,z})}_{L^p(\Omega)}
    \le \sum_{n=0}^\infty \sqrt{n!} \Big(4(M+1) (p-1)\Big)^{n/2}  \norm{f_n^{\ep,z}}_{L^2(\TT^n)},
\end{align*}
which is finite by Assumption~\ref{assu:main}(B).
\end{proof}

\begin{lemma}[Tightness]
\label{lemma:tightness}
Assume that Assumption~\ref{assu:main} (B) and (C) hold, let $M>0$ and a family~$\{u^\ep\}_{\ep>0}\subset\cA_M$, then the family~$\{ X^{\ep,u^\ep}\}_{\ep>0}$ is tight in~$C(\KK)$.
\end{lemma}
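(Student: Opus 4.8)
The plan is to verify a Kolmogorov--Chentsov-type tightness criterion in $C(\KK)$: it suffices to control the family at a single fixed point together with a uniform-in-$\ep$ $L^p$-modulus estimate on the increments, the latter upgrading to equicontinuity in probability via a dyadic chaining argument (equivalently, the Garsia--Rodemich--Rumsey inequality). The two structural hypotheses on $\omega$ in Assumption~\ref{assu:main}(C), namely $\int_0^1 \omega(s)\,s^{-1-\alpha_0}\,\D s<\infty$ and $\omega(s)/s\to\infty$, are precisely what make this chaining converge for a general, possibly non-power, modulus. Since $X^{\ep,u^\ep}=\cG^\ep(\ep W^{u^\ep/\ep})$ is already $C(\KK)$-valued, continuity of paths is granted and only the two quantitative inputs remain.

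First I would establish the key increment estimate, which is the deterministic heart of the argument. Writing
\[
X^{\ep,u^\ep}(z)-X^{\ep,u^\ep}(y)=\sum_{n=0}^\infty I_n^{\ep,u^\ep}\big(f_n^{\ep,z}-f_n^{\ep,y}\big),
\]
I would apply the triangle inequality together with the $\ep$-weighted bound \eqref{eq:boundmep}, summing $\norm{m_\theta(\cdot)}_{L^p(\Omega)}$ over $\theta\in\Theta(k,n)$ and over $0\le k\le n$ exactly as in the proof of Lemma~\ref{lemma:bound}. Bounding $\ep^{n-k}\le1$, $\sqrt{(n-k)!}\le\sqrt{n!}$ and using $\sum_{k}\binom{n}{k}=2^n$ yields, for every $p\ge2$,
\[
\norm{X^{\ep,u^\ep}(z)-X^{\ep,u^\ep}(y)}_{L^p(\Omega)}\le \sum_{n=0}^\infty \sqrt{n!}\,\big(4(M+1)(p-1)\big)^{n/2}\,\norm{f_n^{\ep,z}-f_n^{\ep,y}}_{L^2(\TT^n)}.
\]
Choosing $\kappa=\big(4(M+1)(p-1)\big)^{1/2}$ in \eqref{eq:equicont}, and discarding the nonnegative $f_n^z-f_n^y$ terms on its left-hand side, Assumption~\ref{assu:main}(C) gives, for all $\ep$ small enough and uniformly in $z,y\in\KK$ and in the family $\{u^\ep\}$,
\[
\norm{X^{\ep,u^\ep}(z)-X^{\ep,u^\ep}(y)}_{L^p(\Omega)}\le \omega(\norm{z-y}).
\]

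Second, the marginal bound at a fixed $z_0\in\KK$ follows at once from Lemma~\ref{lemma:bound} under Assumption~\ref{assu:main}(B), giving $\sup_{\ep}\norm{X^{\ep,u^\ep}(z_0)}_{L^p(\Omega)}<\infty$, hence tightness of $\{X^{\ep,u^\ep}(z_0)\}$ in $\bR$. I would then feed the increment estimate into a dyadic chaining over $\KK\subset\bR^d$: at scale $2^{-k}$ there are $\mathcal{O}(2^{kd})$ neighbouring pairs, so the $L^p$ increment bound controls the level-$k$ oscillation by $\mathcal{O}\big(2^{kd/p}\,\omega(2^{-k})\big)$, and summing over levels gives a uniform-in-$\ep$ bound on $\bE\big[\sup_{\norm{z-y}\le\delta}\abs{X^{\ep,u^\ep}(z)-X^{\ep,u^\ep}(y)}\big]$ by the tail of $\sum_{k}2^{kd/p}\omega(2^{-k})$. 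Comparing this series to $\int_0^1\omega(s)\,s^{-1-d/p}\,\D s$, the integrability in Assumption~\ref{assu:main}(C) ensures convergence once $p$ is fixed large enough that $d/p\le\alpha_0$, so the bound tends to $0$ as $\delta\downarrow0$; a Markov inequality then delivers $\lim_{\delta\downarrow0}\limsup_{\ep\downarrow0}\bP\big(\sup_{\norm{z-y}\le\delta}\abs{X^{\ep,u^\ep}(z)-X^{\ep,u^\ep}(y)}>\eta\big)=0$ for every $\eta>0$, and Arzelà--Ascoli yields tightness in $C(\KK)$. The main obstacle is exactly this last passage: converting an $L^p$ increment bound carrying a general modulus $\omega$ into a uniform equicontinuity estimate across the whole family $\{u^\ep\}$ rather than for a single fixed process, which forces one to choose $p$ so that the factor $2^{kd/p}$ produced by the $d$-dimensional chaining is absorbed by the margin $\alpha_0$ supplied by the integrability hypothesis.
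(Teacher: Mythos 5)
Your proposal is correct and follows essentially the same route as the paper: the identical increment estimate $\norm{X^{\ep,u^\ep}(z)-X^{\ep,u^\ep}(y)}_{L^p(\Omega)}\le\omega(\norm{z-y})$ obtained from the bound \eqref{eq:boundmep} and Assumption~\ref{assu:main}(C), combined with the pointwise moment bound of Lemma~\ref{lemma:bound}, a Kolmogorov-type modulus estimate requiring $d/p$ (plus a small H\"older exponent) to fit under $\alpha_0$, and the standard tightness criterion in $C(\KK)$. The only difference is presentational: where you carry out the dyadic chaining by hand, the paper outsources that step to Ibragimov's multidimensional continuity theorem and then cites Billingsley's tightness criterion, so your argument is simply a self-contained version of the same proof.
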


\begin{proof}
Let~$y,z\in\KK$ and~$\ep>0$. Using the same computations as in the proof of Lemma~\ref{lemma:bound} and Assumption~\ref{assu:main}(C), one obtains that for all~$p>2$:
\[
\norm{ X^{\ep,u^\ep}(z) - X^{\ep,u^\ep}(y)}_{L^p(\Omega)} \le  \sum_{n=0}^\infty  \sqrt{n!} \, \Big(4 (M+1) (p-1) \Big)^{n/2} \norm{f_n^{\ep,z} - f_n^{\ep,y}}_{L^2(\TT^n)}
\le \omega\big(\norm{ z-y} \big).
\]
We appeal to~\cite[Theorem 6]{Ibra83}, a multidimensional version of Kolmogorov's continuity theorem, which then states that for all~$\alpha>0$ and some constant~$C>0$:
\[
\bE \left[ \sup_{\norm{z-y}\le1} \frac{\abs{X^{\ep,u^\ep}(z) - X^{\ep,u^\ep}(y) }}{\norm{z-y}^\alpha}\right]
    \le C \int_0^1 \frac{\omega(u)}{u^{1+\alpha+d/p}} \du.
\]
The latter is finite by choosing~$\alpha$ and~$p$ such that~$\alpha+d/p<\alpha_0$, where we recall that~$\alpha_0>0$ comes from Assumption~\ref{assu:main}(C) and~$d$ is such that~$\KK\subset\bR^d$. This is possible because we can choose~$\alpha>0$ as small and~$p$ as large as we want, for instance~$\alpha<\alpha_0$ and~$p=1+d/(\alpha_0-\alpha)$.
Combining this with Lemma~\ref{lemma:bound}, Aldous theorem~\cite[Theorem 7.3]{Billingsey99} yields the tightness of~$\{ X^{\ep,u^\ep}\}_{\ep>0}$ in~$C(\KK)$.
\end{proof}
\begin{remark}\label{rk:continuity}
Suppose there exists a family of kernels~$\{f_n^z\}_{n\ge0,z\in\KK}$ such that~$f_n^z\in L^2(\TT^n)$ for all~$n\ge0$, and which satisfy conditions~\eqref{eq:seriesbound} and~\eqref{eq:equicont}. Define the real-valued process~$Z$ via the Wiener chaos expansion 
\[
Z(z) := \sum_{n=0}^\infty I_n(f_n^z),\quad z\in\KK.
\]
Then the version of the Kolmogorov's continuity theorem used above entails that~$Z$ admits a version which is H\"older continuous of any order~$\alpha<\alpha_0.$
\end{remark}


\begin{lemma}[Convergence]
Let Assumption~\ref{assu:main} (A) and (B) hold.
Consider~$M>0$ and a family~$\{u^\ep\}_{\ep>0}\subset\cA_M$ such that~$u^\ep$ converges in distribution to~$u$ as~$\ep$ goes to zero. Then, for all~$z\in\KK$, $X^{\ep,u^\ep}(z)$ converges in distribution to~$\overbar X^u(z)$, defined in~\eqref{eq:Xu}, as~$\ep$ goes to zero.
\label{lemma:cvg}
\end{lemma}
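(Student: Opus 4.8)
The plan is to split each perturbed integral into the single ``deterministic'' term obtained by selecting the shift measure in every slot and a remainder containing at least one genuine white-noise factor, to show the remainder is negligible, and to identify the main term as the image of $u^\ep$ under a \emph{weakly continuous} functional to which the continuous mapping theorem applies. Writing $\nu(\D t\dx)=u^\ep(t,x)\,\widehat\mu(\D t\dx)$ as in Section~\ref{subsec:integral}, recall that the shifted noise here is $\ep W^{u^\ep/\ep}=\ep W+\nu$, so for fixed $z$ the development of Lemma~\ref{lemma:integration} gives
\[
X^{\ep,u^\ep}(z)=\sum_{n=0}^\infty\sum_{k=0}^n\sum_{\theta\in\Theta(k,n)}m_\theta(f_n^{\ep,z})
=\Phi_\ep(u^\ep)+R_\ep,
\]
where $\Phi_\ep(v):=\sum_{n\ge0}J_n^v(f_n^{\ep,z})$ collects the pure-shift terms ($k=n$, for which $m_\theta=J_n^{u^\ep}$ by Lemma~\ref{lemma:integration}(3)) and $R_\ep$ gathers all terms with $k<n$. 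Every summand of $R_\ep$ carries a factor $\ep^{n-k}\ge\ep$, so the moment bound~\eqref{eq:boundmep} together with Assumption~\ref{assu:main}(B) would yield $\norm{R_\ep}_{L^2(\Omega)}\le\ep\,C\to0$; hence $R_\ep\to0$ in probability and, by Slutsky's theorem, it suffices to treat $\Phi_\ep(u^\ep)$.

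The heart of the argument is the weak continuity of the limiting map $\Phi(v):=\sum_{n\ge0}J_n^v(f_n^z)=X^v(z)$ on $\cS_M$ equipped with the weak $L^2(\TT)$-topology, on which it is defined by~\eqref{eq:Xu}. The subtlety is that, for $n\ge2$, $v\mapsto J_n^v(f_n^z)$ is a genuine multilinear form and is \emph{not} weakly continuous for arbitrary bounded kernels; continuity relies on the Hilbert--Schmidt nature of $L^2$ kernels. Concretely, Cauchy--Schwarz gives the uniform bound $\abs{J_n^v(f)}\le M^{n/2}\norm{f}_{L^2(\TT^n)}$ for all $v\in\cS_M$. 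Approximating $f_n^z$ in $L^2(\TT^n)$ by finite sums of tensor products $g_1\otimes\cdots\otimes g_n$, for which $J_n^v=\prod_{j}\langle g_j,v\rangle_{L^2(\TT)}$ is a finite sum of products of weakly continuous linear functionals, this uniform bound shows that each $v\mapsto J_n^v(f_n^z)$ is weakly continuous on $\cS_M$. Finally, the same bound and Assumption~\ref{assu:main}(B) (with $\kappa=\sqrt M$) make the series $\sum_n J_n^v(f_n^z)$ converge uniformly over $v\in\cS_M$, so $\Phi$ is a uniform limit of weakly continuous functions and is therefore weakly continuous.

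It then remains to replace the kernels and pass to the limit. Since $z$ is fixed, Assumption~\ref{assu:main}(A) gives the termwise convergence $f_n^{\ep,z}\to f_n^z$ and Assumption~\ref{assu:main}(B) a summable majorant, from which I would deduce, by dominated convergence for series,
\[
\sup_{v\in\cS_M}\abs{\Phi_\ep(v)-\Phi(v)}
\le\sum_{n=0}^\infty M^{n/2}\norm{f_n^{\ep,z}-f_n^z}_{L^2(\TT^n)}\xrightarrow[\ep\downarrow0]{}0.
\]
Because the closed ball $\cS_M$ of the separable Hilbert space $L^2(\TT)$ is compact and metrizable in the weak topology, the hypothesis $u^\ep\to u$ in distribution on $\cS_M$ combined with the continuous mapping theorem gives $\Phi(u^\ep)\to\Phi(u)=X^u(z)$ in distribution; the deterministic uniform bound above then yields $\Phi_\ep(u^\ep)\to X^u(z)$ in distribution by Slutsky's theorem. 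Together with $R_\ep\to0$ this proves the claim.

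I expect the main obstacle to be the weak continuity in the second paragraph: unlike the linear case $n=1$, the higher-chaos multilinear forms are only weakly continuous because the kernels lie in $L^2$, and one must combine the tensor-product density argument with the uniform-in-$v$ bound to legitimately interchange the weak limit with the infinite sum. The uniform-in-$\ep$ control of the kernel-replacement error and of the remainder is the accompanying technical point, handled through Assumption~\ref{assu:main}(A) and (B).
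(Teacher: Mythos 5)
Your proof is correct and reaches the conclusion, but the treatment of the pure-shift part is genuinely different from the paper's. Both proofs use the same decomposition (the $k=n$ terms, i.e.\ $J_n^{u^\ep}(f_n^{\ep,z})$, versus all terms carrying at least one white-noise factor) and the same $O(\ep)$ moment estimate for the remainder via~\eqref{eq:boundmep} (minor typo on your side: for $\ep<1$ and $n-k\ge1$ one has $\ep^{n-k}\le\ep$, not $\ge\ep$). For the main term, however, the paper never establishes weak continuity of $v\mapsto X^v(z)$: it first invokes the Skorohod representation theorem to replace convergence in distribution by almost sure weak convergence $u^\ep\rightharpoonup u$ in $L^2(\TT)$, and then proves $J_n^{u^\ep}(f_n^{\ep,z})\to J_n^{u}(f_n^z)$ almost surely by an induction over the integrated variables, writing $J_k^v(f)=\langle J_{k-1}^v(f),v\rangle_{L^2(\TT)}$ and combining at each step the strong--weak pairing property (what it calls $\mathfrak P(\rightharpoonup)$) with dominated convergence in the remaining variables, before a final dominated-convergence argument in $n$. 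You instead prove outright that $\Phi\colon v\mapsto\sum_n J_n^v(f_n^z)$ is continuous on $\cS_M$ with the weak topology --- via density of finite sums of tensor products, the uniform bound $\abs{J_n^v(f)}\le M^{n/2}\norm{f}_{L^2(\TT^n)}$, and uniform convergence of the series --- and then apply the continuous mapping theorem (legitimate since $\cS_M$ is weakly compact and metrizable) together with Slutsky, absorbing the $\ep$-dependence of the kernels through an estimate uniform over $\cS_M$. Your route dispenses with the Skorohod representation entirely and isolates exactly the structural fact suggested in the remark following Assumption~\ref{assu:abstract} (continuity of $\cG^0$ suffices for its condition (b)); in particular, your continuity argument would also deliver the closedness part of Lemma~\ref{lemma:goodness} essentially for free, whereas the paper's fiberwise induction is more hands-on and reproves convergence along the given sequence. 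One caveat, common to both proofs: the dominated-convergence step for $\sum_n M^{n/2}\norm{f_n^{\ep,z}-f_n^z}_{L^2(\TT^n)}$ needs a summable majorant independent of $\ep$, which requires reading Assumptions (A)--(B) as giving uniform-in-$\ep$ control of the kernel norms; the paper makes exactly the same move (its claim that $\norm{f_n^{\ep,z}}_{L^2(\TT^n)}\le2\norm{f_n^{z}}_{L^2(\TT^n)}$ for all $n$), so this is not a gap relative to the paper.
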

\begin{proof}
We start by recalling, as Lemma~\ref{lemma:integration}(3) shows, that the multiple integral~$J^u_n$ defined in~\eqref{eq:Jnu} arises from the unique $n$-tuple belonging to~$\Theta(n,n)$, made exclusively of~$\nu$. On the other hand, all the $n$-tuples containing one or more~$\ep \dot W$ will converge to zero.

Let us fix~$z\in\KK$. We decompose the quantity of interest in the following way:
\[
X^{\ep,u^\ep}(z) - \overbar X^u(z)
=  \bm S_1^\ep + \bm S_2^\ep, 
\]
where
\begin{align*}
    \bm S^\ep_1 :=  \sum_{n=0}^\infty  \sum_{k=0}^{n-1}  \sum_{\theta\in \Theta(k,n)}  m_\theta(f_n^{\ep,z})
    \qquad \mathrm{and} \qquad
    \bm S^\ep_2 := \sum_{n=0}^\infty \sum_{\theta\in \Theta(n,n)} m_\theta(f_n^{\ep,z}) - \overbar X^u(z).
\end{align*}
Similarly to the proof of Lemma~\ref{lemma:bound}, the bound~\eqref{eq:boundmep} and the identity~$\sum_{k=0}^n \binom{n}{k} = 2^n$ yield
\begingroup
\addtolength{\jot}{.7em}
\begin{align*}
    \norm{\bm S_1^\ep}_{L^2(\Omega)} 
    \le  \sum_{n=0}^\infty \sum_{k=0}^{n-1} \sum_{\theta\in \Theta(k,n)} \norm{m_\theta(f_n^{\ep,z})}_{L^2(\Omega)}
    & \le  \sum_{n=0}^\infty \sum_{k=0}^{n-1} \binom{n}{k} M^{k/2} \ep^{n-k} \sqrt{(n-k)!} \norm{f_n^{\ep,z}}_{L^2(\TT^n)} \\
    & \le \ep \, \sum_{n=0}^\infty \sqrt{n!} \, \big(4 (M+1) \big)^{n/2}  \norm{f_n^{\ep,z}}_{L^2(\TT^n)},
\end{align*}
\endgroup
for all~$\ep<1$.
Assumption~\ref{assu:main}(B) entails that the series is finite and therefore~$\norm{\bm S_1^\ep}_{L^2(\Omega)}$ tends to zero as~$\ep$ goes to zero.

For all~$n\in\bN$, let us define
\[
\mathfrak J_n^{\ep,z} := J_n^{u^\ep}(f^{\ep,z}_n) - J_n^{u}(f^{z}_n), 
\qquad \mathrm{such that} \qquad
\bm S_2^\ep
= f_0^{\ep,z}-f_0^{z} + \sum_{n=1}^\infty \mathfrak J_n^{\ep,z}.
\]
According to the Skorohod representation theorem~\cite[Theorem A.3.9]{DE97}, there exists a probability space on which are defined~$\{\widetilde u^\ep\}_{\ep>0}$ and~$\widetilde u$ having the same law as~$\{u^\ep\}_{\ep>0}$ and~$u$ on the original probability space, and such that~$\widetilde u^\ep$ converges towards~$\widetilde u$ in an almost sure sense. This is a standard trick in the weak convergence literature; we retain the original labels and continue with almost sure convergence. However, we recall that this convergence takes place in the weak topology in~$L^2(\TT)$, and therefore we need to work out the convergence of~$\mathfrak J_n^{\ep,z}$ by induction.

Let us fix~$n\in\bN$ for the moment and recall that~$J^u_n$, defined in~\eqref{eq:Jnu}, maps~$L^2(\TT^n)$ to~$\bR$. We claim that for all~$k\in \llbracket 1,n-1 \rrbracket$, the following function, mapping~$\TT$ to~$\bR$, tends to zero in~$L^2(\TT)$, almost surely and for almost all~$(\bm{t_{n-k-1}},\bm{x_{n-k-1}})\in\TT^{n-k-1}$:
\begin{equation}\label{eq:induction}
J^{u^\ep}_k \big(f_n^{\ep,z}(\bm{t_{n-k-1}},\bm{x_{n-k-1}},\,\bullet\,) \big) - J^{u}_k \big(f_n^{z}(\bm{t_{n-k-1}},\bm{x_{n-k-1}},\,\bullet\,) \big).
\end{equation}
We name this claim~$\mathfrak C(k)$.

With this in mind, we start by pointing out a uniform bound. Assumption~\eqref{assu:main}(A) entails that, for~$\ep>0$ small enough, $\norm{f_n^{\ep,z}}_{L^2(\TT^n)} \le 2\norm{f_n^z}_{L^2(\TT^n)}$ for all~$n\ge0$ and~$z\in\KK$. Hence, by using Cauchy--Schwarz inequality~$k$ times, the following bound stands almost surely, for all~$k\le n$ and all~$(\bm{t_{n-k}},\bm{x_{n-k}})\in\TT^{n-k}$:
\begin{align} \label{eq:domination}
    \abs{J^{u^\ep}_k \big(f_n^{\ep,z}(\bm{t_{n-k}},\bm{x_{n-k}},\,\bullet\,) \big)
    - J^{u}_k \big(f_n^{z}(\bm{t_{n-k}},\bm{x_{n-k}},\,\bullet\,) \big)} 
    \le 3 M^{k/2} \norm{f_n^z (\bm{t_{n-k}},\bm{x_{n-k}},\,\bullet\,)}_{L^2(\TT^k)}.
\end{align}
We recall that if~$\{a_\ep\}$ and~$\{b_\ep\}$ are sequences in a Hilbert space~$(H, \,\langle \cdot,\cdot \rangle)$ such that $a_\ep\to a$ (strongly) and~$b_\ep\rightharpoonup b$ (weakly) as~$\ep$ goes to zero, then~$\langle a_\ep, b_\ep \rangle \to \langle a,b \rangle$; we call this property~$\mathfrak {P}(\rightharpoonup)$ for further reference.

Starting with~$k=1$, we know that~$\lim_{\ep\downarrow0}\norm{f_n^{\ep,z} - f_n^z}_{L^2(\TT^n)}=0$, which implies that for almost every~$(\bm{t_{n-1}},\bm{x_{n-1}})\in\TT^{n-1}$,
\begingroup
\addtolength{\jot}{1em}
\[
\lim_{\ep\downarrow0}\norm{f_n^{\ep,z}(\bm{t_{n-1}},\bm{x_{n-1}},\,\bullet\,) - f_n^{z}(\bm{t_{n-1}},\bm{x_{n-1}},\,\bullet\,)}_{L^2(\TT)}=0,
\]
\endgroup
and we also know that~$u^\ep \rightharpoonup u$, almost surely.
Property~$\mathfrak {P}(\rightharpoonup)$ entails that, almost surely and for almost every~$(\bm{t_{n-1}},\bm{x_{n-1}})\in\TT^{n-1}$, the following goes to zero as~$\ep$ goes to zero:
\begingroup
\addtolength{\jot}{.7em}
\begin{align}
\label{eq:Jcvg}
 J^{u^\ep}_1 \big(f_n^{\ep,z}&(\bm{t_{n-1}},\bm{x_{n-1}},\,\bullet\,) \big) - J^{u}_1 \big(f_n^{z}(\bm{t_{n-1}},\bm{x_{n-1}},\,\bullet\,) \big)\\
&  = \big\langle f_n^{\ep,z}(\bm{t_{n-1}},\bm{x_{n-1}},\,\bullet\,), u^\ep \big\rangle_{L^2(\TT)} - \big\langle f_n^{z}(\bm{t_{n-1}},\bm{x_{n-1}},\,\bullet\,), u \big\rangle_{L^2(\TT)}. \nonumber
\end{align}
\endgroup
Therefore, dominated convergence, induced by~\eqref{eq:domination}, yields that~$\mathfrak C(1)$ holds.

Now suppose that~$ \mathfrak C(k-1)$ holds, for some~$k\in \llbracket 2,n-1 \rrbracket$. We observe that, for any~$f\in L^2(\TT^{k})$,
\[
J^u_{k} (f) = \int_\TT J^u_{k-1} \big(f(t_{1}, x_{1},\,\bullet\,)\big) \, u(t_{1}, x_{1}) \, \widehat\mu(\D t_{1} \dx_{1})
=\langle J^u_{k-1}(f),u \rangle_{L^2(\TT)},
\]
which helps us see the following equality:
\begingroup
\addtolength{\jot}{.7em}
\begin{align*}
 J^{u^\ep}_{k} \big(f_n^{\ep,z}&(\bm{t_{n-k}},\bm{x_{n-k}},\,\bullet\,) \big) - J^{u}_{k} \big(f_n^{z}(\bm{t_{n-k}},\bm{x_{n-k}},\,\bullet\,) \big) \\
    & = \big\langle J^{u^\ep}_{k-1} \big(f_n^{\ep,z}(\bm{t_{n-k}},\bm{x_{n-k}},\,\bullet\,) \big), u^\ep \big\rangle_{L^2(\TT)}
    - \big\langle J^{u}_{k-1} \big(f_n^{z}(\bm{t_{n-k}},\bm{x_{n-k}},\,\bullet\,) \big), u \big\rangle_{L^2(\TT)}.
\end{align*}
\endgroup
From~$\mathfrak {P}(\rightharpoonup)$ and the induction hypothesis~$\mathfrak C(k-1)$, the quantity above tends to zero almost surely and for almost every~$(\bm{t_{n-k-1}},\bm{x_{n-k-1}})\in\TT^{n-k-1}$.
By dominated convergence (from~\eqref{eq:domination}), we deduce that~$\mathfrak C(k)$ holds, and therefore the claim is proved for all~$k\in \llbracket 1,n-1 \rrbracket$. Combining~$\mathfrak C(n-1)$ with~$\mathfrak {P}(\rightharpoonup)$ again yields that~$\mathfrak J_n^{\ep,z}=J_n^{u^\ep}(f^{\ep,z}_n) - J_n^{u}(f^{z}_n)$ tends to zero almost surely as~$\ep$ goes to zero.

To conclude this proof we observe that, from~\eqref{eq:domination}, $\mathfrak J^{\ep,z}_n$ is dominated in the following sense
\[
\abs{\mathfrak J^{\ep,z}_n} 
\le 3 M^{n/2}\norm{f_n^z}_{L^2(\TT^n)} =: \mathfrak J_n^z,
\]
and that
$\sum_{n=1}^\infty\abs{\mathfrak J_n^z} < \infty$, by Assumption~\ref{assu:main}(B).
This entails, by dominated convergence, the almost sure convergence:
\[
\lim_{\ep\downarrow0} \bm S_2^\ep = \lim_{\ep\downarrow0} \left(  f^{\ep,z}_0 - f^{z}_0 + \sum_{n=1}^\infty \mathfrak J_n^{\ep,z} \right) = \sum_{n=1}^\infty \lim_{\ep\downarrow0} \mathfrak J_n^{\ep,z} = 0.
\]
Coming back from Skorohod's world, we in fact proved that, as~$\ep$ goes to zero, $\sum_{n=0}^\infty m_\theta(f_n^{\ep,z})$ converges towards~$\overbar X^u(z)$ in distribution in the original probability space, where~$\theta\in \Theta(n,n)$. We also showed that~$\bm S_1^\ep$ tends to zero in probability. Put together, they imply that~$\bm S_1^\ep+\sum_{n=0}^\infty m_\theta(f_n^{\ep,z})$ converges towards~$\overbar X^u(z)$ in distribution, as claimed.
\end{proof}

\begin{lemma}[Compactness]
\label{lemma:goodness}
Under Assumption~\ref{assu:main}, the following sets are compact subsets of~$C(\KK)$ for all~$M>0$:
\[
\Gamma_M = \Big\{ \overbar X^u : u\in\cS_M \Big\}.
\]
\end{lemma}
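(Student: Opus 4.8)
The plan is to show that the map $\cG^0\colon u\mapsto X^u$ is continuous from $\cS_M$, equipped with the weak topology of $L^2(\TT)$, into $(C(\KK),\norm{\cdot}_\infty)$, and then to read off compactness of $\Gamma_M=\cG^0(\cS_M)$ as the continuous image of a compact set. The set $\cS_M$ is the closed ball of radius $\sqrt M$ in $L^2(\TT)$, which is separable (it carries the countable complete orthonormal system built from $\{\phi_i\}_{i\in\bN}$) and reflexive; hence, by the Banach--Alaoglu theorem, $\cS_M$ is weakly compact, and on this bounded set the weak topology is metrizable. It therefore suffices to prove \emph{sequential} continuity of $\cG^0$, i.e.\ that $u^j\rightharpoonup u$ in $\cS_M$ forces $\norm{X^{u^j}-X^u}_\infty\to0$.

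First I would record two consequences of $u\in\cS_M$. Applying the Cauchy--Schwarz inequality $n$ times in~\eqref{eq:Jnu} gives the deterministic bound $\abs{J_n^u(g)}\le M^{n/2}\norm{g}_{L^2(\TT^n)}$ for every $g\in L^2(\TT^n)$. Taking $\kappa=\sqrt M$ in~\eqref{eq:seriesbound} then shows that the series defining $X^u(z)$ in~\eqref{eq:Xu} converges absolutely, uniformly in $u\in\cS_M$, so $\cG^0$ is well defined; and taking $\kappa=\sqrt M$ in~\eqref{eq:equicont}, together with the linearity of $J_n^u$ in its kernel, yields, for all $y,z\in\KK$ and all $u\in\cS_M$,
\[
\abs{X^u(z)-X^u(y)}\le\sum_{n=0}^\infty M^{n/2}\norm{f_n^z-f_n^y}_{L^2(\TT^n)}\le\omega\big(\norm{z-y}\big).
\]
Thus each $X^u$ is $\omega$-continuous, so $X^u\in C(\KK)$, and the whole family $\{X^u:u\in\cS_M\}$ is \emph{uniformly} equicontinuous with common modulus $\omega$.

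With equicontinuity in hand I would establish pointwise convergence and then upgrade it. For fixed $z\in\KK$, the convergence $X^{u^j}(z)\to X^u(z)$ is exactly the computation of Lemma~\ref{lemma:cvg}, specialised to the $\ep$-independent kernels $f_n^z$ and to the \emph{deterministic} weak convergence $u^j\rightharpoonup u$ (so the Skorohod step there is unnecessary): the inductive claim $\mathfrak C(k)$ and property $\mathfrak P(\rightharpoonup)$ give $J_n^{u^j}(f_n^z)\to J_n^u(f_n^z)$ for each $n\ge1$, the $n=0$ term being constant in $u$, while the summable dominating bound $M^{n/2}\norm{f_n^z}_{L^2(\TT^n)}$ from Assumption~\ref{assu:main}(B) lets dominated convergence conclude. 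To pass to uniform convergence I would use the compactness of $\KK$: given $\delta>0$, choose $s$ with $\omega(s)<\delta$, cover $\KK$ by finitely many $s$-balls centred at $z_1,\dots,z_m$, apply pointwise convergence at the centres, and invoke the common modulus $\omega$ on both $X^{u^j}$ and $X^u$ to obtain $\abs{X^{u^j}(z)-X^u(z)}<3\delta$ for all $z\in\KK$ once $j$ is large. Hence $\norm{X^{u^j}-X^u}_\infty\to0$, so $\cG^0$ is continuous and $\Gamma_M$ is compact.

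The main obstacle is the pointwise convergence step, because $J_n^u$ depends on $u$ through an $n$-fold product, so weak convergence $u^j\rightharpoonup u$ cannot be passed through the integral directly; one must chain the single-factor property $\mathfrak P(\rightharpoonup)$ across the $n$ variables by the induction of Lemma~\ref{lemma:cvg}. Granting that lemma, the genuinely new ingredient here is the \emph{uniformity in $z$}: it is Assumption~\ref{assu:main}(C) that furnishes a common modulus of continuity for the entire family $\{X^u\}_{u\in\cS_M}$, and this is precisely what converts pointwise convergence into convergence in $C(\KK)$.
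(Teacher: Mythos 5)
Your proof is correct, and it organizes the argument differently from the paper. The paper proves compactness of~$\Gamma_M$ in two steps: relative compactness via Arzel\'a--Ascoli (a uniform bound $\sup_{z\in\KK,\,n\ge1}\abs{X^{u_n}(z)}<\infty$ obtained as in Lemma~\ref{lemma:bound}, plus the equicontinuity bound $\abs{X^{u_n}(z)-X^{u_n}(y)}\le\omega(\norm{z-y})$ as in Lemma~\ref{lemma:tightness}), and then closedness, by taking a sequence $\{X^{u_n}\}$ \emph{already convergent} in~$C(\KK)$, extracting a weakly convergent subsequence $u_{n_k}\rightharpoonup u$ in~$\cS_M$, and identifying the limit pointwise as~$X^u$ via the $\bm S_2^\ep$ argument of Lemma~\ref{lemma:cvg}. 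You instead prove that $\cG^0\colon u\mapsto X^u$ is continuous from $\cS_M$ with the weak topology into~$C(\KK)$ and conclude that $\Gamma_M$ is the continuous image of a compact set; this implements precisely the sufficient condition stated (but not actually used in the proof) in the remark following Assumption~\ref{assu:abstract}. The trade-off is that your route needs two extra points, both of which you handle correctly: metrizability of the weak topology on the ball~$\cS_M$ (via separability of~$L^2(\TT)$), so that sequential continuity suffices, and the finite-cover/three-$\delta$ argument upgrading pointwise to uniform convergence, where the common modulus~$\omega$ from Assumption~\ref{assu:main}(C) does the work. In exchange you dispense with Arzel\'a--Ascoli and with the closedness step, and you obtain the slightly stronger conclusion that $\cG^0$ is weak-to-uniform continuous, whereas the paper's closedness argument only needs pointwise identification of a limit assumed to exist in~$C(\KK)$. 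Both proofs rest on the same core ingredients: the deterministic Cauchy--Schwarz bound $\abs{J_n^u(g)}\le M^{n/2}\norm{g}_{L^2(\TT^n)}$, Assumption~\ref{assu:main}(B)--(C) applied with $\kappa=\sqrt M$, and the inductive weak-convergence argument (claim~$\mathfrak C(k)$ together with property~$\mathfrak P(\rightharpoonup)$) from Lemma~\ref{lemma:cvg}.
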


\begin{proof}
The proofs of relative compactness and closure parallel the proofs of tightness and convergence of the marginals, only in a deterministic and therefore simpler setting.  Let us fix~$M>0$ and consider a sequence~$\{X^{u_n}\}_{n\ge1} \subset \Gamma_M$. Similar calculations as in the proof of Lemma~\ref{lemma:bound} yield
\[
\sup_{z\in\KK,\,n\ge1} \abs{X^{u_n}(z)} < \infty.
\]
Furthermore, the equicontinuity follows as in the proof of Lemma~\ref{lemma:tightness}:
\[
\abs{ X^{u_n}(z) -X^{u_n}(y) } \le \omega\big( \norm{z-y} \big).
\]
Therefore an application of Arzel\'a--Ascoli's theorem yields the relative compactness of~$\Gamma_M$.

Consider a converging sequence~$\{X^{u_n}\}_{n\ge1} \subset \Gamma_M$ and denote its limit by~$X^\infty$.
Since~$\cS_M$ is compact with respect to the weak topology, there exists a converging subsequence~$\{u_{n_k}\}_{k\ge1}$ of~$\{u_n\}$, with limit~$u\in\cS_M$.
Then using the arguments of convergence of~$\bm S_2^\ep$ in the proof of Lemma~\ref{lemma:cvg}, one can prove that for all~$z\in\KK$,
\[
\lim_{k\uparrow\infty} \abs{X^{u_{n_k}}(z) - \overbar X^u(z)} =0.
\]
    Hence, we deduce that~$X^\infty=\overbar X^u$, which implies~$X^\infty\in\Gamma_M$. This entails that~$\Gamma_M$ is closed and thus compact.
\end{proof}

\begin{proof}[Proof of Theorem~\ref{thm:main}]
Lemmas~\ref{lemma:tightness} and~\ref{lemma:cvg} respectively show the tightness of~$\{ X^{\ep,u^\ep}\}_{\ep>0}$ in~$C(\KK)$ and the convergence of its finite-dimensional distributions to those of~$\overbar X^u$. Together they yield the weak convergence condition of Assumption~\ref{assu:abstract}(a). Lemma~\ref{lemma:goodness} takes care of the other condition that needs to be satisfied for Theorem~\ref{thm:abstract} to hold. Therefore a direct application of the latter yields the desired claim.
\end{proof}



\section{Applications} \label{sec:applis}

The kernels of a random variable's chaos expansion are sometimes delicate to compute explicitly, making Assumption~\ref{assu:main} difficult to verify, but there exist interesting cases meeting this requirement. In this section, we exhibit families of processes which are covered by our framework along with the sufficient conditions for the pathwise LDP to hold.

We start by displaying in subsection~\ref{subsec:malliavin} a formula to compute the kernels of infinitely Malliavin differentiable processes. The following two subsections introduce a route to apply our main result (Theorem~\ref{thm:main}) to multiple Skorohod integrals and linear Skorohod equations driven by a white noise measure. Large deviations for anticipating equations were derived by exploiting their flow property in~\cite{MM98,MNS92} and~\cite{MNS91a}, for Stratonovich and Skorohod integrals respectively. However we note that these results are limited to the case of Brownian motion and single integral.

The Wick--It\^o integral is a conducive way of defining stochastic integrals driven by a fractional Brownian motion; we present a way of computing the kernels of such an integral in subsection~\ref{subsec:wick}. This also leads to sufficient conditions for an LDP to hold. Integrating against the fractional Brownian motion is a tricky business: large deviations were studied recently in the case~$H>\half$~\cite{BS20}, while our approach covers the whole range~$H\in(0,1)$. Finally, subsection~\ref{subsec:martingale} introduces a class of processes inspired from closed martingales in~$\bR_+$ where the continuity criterion (Assumption~\ref{assu:main}(C)) is simplified.

For clarity of exposition, we will replace~$(t,x)$ by~$t$, even where~$\TT$ represents time-space. By convention, an infimum over an empty set is equal to~$+\infty$. 
\subsection{Malliavin derivatives}\label{subsec:malliavin}
Adopting notations and definitions from~\cite[Section 1.2]{Nualart06}, we denote by~$D$ the Malliavin derivative operator and by~$\bD^{1,2}$ its domain of application in~$L^2(\Omega)$. For all~$n\ge1$, we also consider the iterated derivatives~$\DD^n$ and their domain of application~$\bD^{n,2}$, which are dense in~$L^2(\Omega)$. If~$F\in\bD^{\infty,2}:=\bigcap_{n\in\bN} \bD^{n,2}$, then the kernels of its chaos expansion are given explicitly by (see for instance~\cite[Exercise 1.2.6 ]{Nualart06}):
\begin{equation}\label{eq:kernelmalliavin}
f_n = \frac{1}{n!}\; \bE[ \DD^n F].
\end{equation}
This yields a verifiable condition for Assumption~\ref{assu:main} to hold, provided one can compute the~$L^2(\TT^n)$-norm of the kernels.
We note that in the white noise case, the Malliavin derivative~$\DD F$ is a stochastic process denoted~$\{ \DD_{t} F: t\in\TT\}$, and similarly~$\DD^n F$ can be written~$\{ \DD_{\ttn}^n F: \ttn\in\TT^n\}$.

\begin{example}
Let~$\KK$ and~$\TT$ be as general as possible and consider for all~$\ep>0$ and $z\in\KK$ the small-noise process
\[
X^\ep(z) = \exp\left(\ep \int_{\TT} h^z(t) \, W(\D t) - \half \ep^2\norm{h^z}^2_{L^2(\TT)}  \right),
\]
where~$h^z\in L^2(\TT)$ for all~$z\in\KK$ and~$\sup_{z\in\KK}\norm{h^z}_{L^2(\TT)}<\infty$. Notice that~$\TT$ is not necessarily equal to~$\KK$, hence the question of adaptation to a filtration is here meaningless.
Straightforward computations yield~$\bE[X^\ep(z)]=1$ and~$\DD^n_{\ttn} X^\ep(z) = \ep^n X^\ep(z) \prod_{i=1}^n h^z(t_i) $ for any~$\ttn\in\TT^n$. These give us the form of the kernels of the chaos expansion of~$X^\ep$ by~\eqref{eq:kernelmalliavin}:
\[
f_n^{\ep,z} (\ttn)
=\frac{\ep^n}{n!} \prod_{i=1}^n h^z(t_i), \quad \text{for all  } n\in\bN,\,\ep>0,\,z\in\KK,\ttn\in\TT^n,
\]
which shows that $f^{\ep,z}_0$ is independent of~$\ep$ and hence Assumption~\ref{assu:main}(A) holds. So does Assumption~\ref{assu:main}(B) since we have
\[
\norm{f_n^{\ep,z}}_{L^2(\TT^n)}
=\frac{\ep^n}{n!}  \norm{h^z}^n_{L^2(\TT)}.
\]
From the observation that $\prod_{i=1}^n a_i - \prod_{i=1}^n b_i = \sum_{i=1}^n \Big( (a_i-b_i) \prod_{j\neq i} c_j \Big)$, where $c$ is either $a$ or $b$, we can derive
\begin{align*}
\norm{f_n^{\ep,z}-f_n^{\ep,y}}_{L^2(\TT^n)} 
&\le \frac{\ep^n}{n!} n \norm{h^z-h^y}_{L^2(\TT)}    \max\big\{\norm{h^z}_{L^2(\TT)},\norm{h^y}_{L^2(\TT)}\big\}^{n-1}.
\end{align*}
This entails Assumption~\ref{assu:main}(C) holds as soon as~$\norm{h^z-h^y}_{L^2(\TT)}\le \omega(\norm{z-y})$ for an appropriate modulus of continuity~$\omega$.
\end{example}

\begin{example}
Let~$\KK=\TT=[0,1]$ such that we recover the Freidlin-Wentzell setting of Example~\ref{ex:epdep}, and consider the small-noise SDE
\[
X^\ep(t) = x + \ep \int_0^t \sigma(X^\ep(s))\,\D W_s, \qquad \text{for all  } t\in\TT,\,\ep>0,
\]
where~$x\in\bR$, $\sigma\in C^\infty(\bR)$ has linear growth and~$W$ is a standard Brownian motion. We ignore the drift for simplicity.

Let us consider the following induction hypothesis: for a given~$n\ge1$ and for all~$p\ge1$, there exists~$C_p<\infty$ such that~$\sup_{t\in\TT} \bE\left[\big\lvert\DD^k_{\ttk} X^\ep(t)\big\lvert^p\right] \le C_p^k$ for all~$k\le n$. 
This is clearly true for~$n=0$ by a standard argument based on Grönwall and Burkholder-Davis-Gundy inequalities. Now, noting that~$\DD\sigma(X)=\DD X\sigma'(X)$, using Hölder's inequality and~$\sigma\in C^\infty(\bR)$, one can conclude that a similar uniform bound holds for~$\DD^n_{\ttn}\sigma(X^\ep(t))$, with a possibly different constant than~$C_p$.
Then one can show that
\begin{align}\label{eq:DDn}
\DD^{n+1}_{\bm{t_{n+1}}} X^\ep(t) 
= \ep \int_0^t \DD^{n+1}_{\bm{t_{n+1}}} \sigma(X^\ep(s))\,\D W_s + \ep \sum_{i=1}^{n+1} \DD^{n}_{\widehat{t_n}(i)} \sigma(X^\ep(t_i)) \one_{t_i \le t}
\end{align}
where~$\widehat{t_n}(i):=(t_1,\cdots,t_{i-1},t_{i+1},\cdots,t_{n+1})$ for all~$n\ge0$. 
The terms in the sum are clearly bounded in~$L^p(\Omega)$ and since~$X^\ep$ is adapted they are non zero only if~$t_i  \ge t_k$ for all~$k\le n+1$.
Then using again~$\DD\sigma(X)=\DD X\sigma'(X)$, Hölder's inequality and~$\sigma\in C^\infty(\bR)$ we deduce that there exists yet another constant~$C_p$ such that
\[
\bE\left[ \Big\lvert \int_0^t \DD^{n+1}_{\bm{t_{n+1}}} \sigma(X^\ep(s))\,\D W_s\Big\lvert^p\right] \le C_p^n\left(1 + \int_0^t \DD^{n+1}_{\bm{t_{n+1}}} X^\ep(s)\,\ds \right)
\]
Grönwall's inequality thus entails that the induction hypothesis holds at~$n+1$, with a possibly different constant. Therefore the kernels~$\{f_n^{\ep,t}\}$ are well-defined by~\eqref{eq:kernelmalliavin}. Clearly~$f_0^{\ep,t}=x$ and moreover, for~$0\le s\le t\le 1$, some constant~$C$ and all~$n\ge1$, we have by~\eqref{eq:DDn}:
\begin{alignat*}{2}
& \norm{\bE\big[\DD^n_{\ttn} X^\ep(t) \big]}^2_{L^2(\TT^n)}\le n\sum_{i=1}^n \int_{\TT^n}&& \bE\left[ \Big\lvert\DD_{\widehat{t_{n-1}}(i)}^{n-1} \sigma(X^\ep(t_i))\Big\lvert^2\right] \,\D\ttn \le C^n; \\
& \norm{\bE \big[\DD^n_{\ttn} \big(X^\ep(t)-X^\ep(s)\big) \big]}^2_{L^2(\TT^n)} && \!\!\!\!\!\le n\sum_{i=1}^n \int_s^t \int_{\TT^{n-1}} \bE\left[ \Big\lvert\DD_{\widehat{t_{n-1}}(i)}^{n-1} \sigma(X^\ep(t_i))\Big\lvert^2\right] \,\D \widehat{t_{n}}(i) \,\D t_i\\
& && \!\!\!\!\!\le C^n (t-s).
\end{alignat*}
These estimate prove that Assumption~\ref{assu:main} (B) and (C) are satisfied and thus the LDP from Theorem~\ref{thm:main} holds in~$C(\KK)$. Of course, this was already known since Freidlin and Wentzell~\cite{Freidlin70} with the seemingly different rate function
\[
I(\psi) = \inf\bigg\{ \half \int_0^1 u_t^2 \dt : u\in L^2([0,1]), \; \psi(t)= x + \int_0^t \sigma(\psi(s)) u_s \ds \bigg\}.
\]
Yet, Lemma 4.1.4 in \cite{DZ10} certifies that these two rate functions must be equal. While the representation above depends on an implicit ODE, \eqref{eq:ratefunction} enforces an explicit expansion for~$\psi$. This provides an alternative viewpoint on families of processes which LDP can be derived from both approaches.
\end{example}

\subsection{Skorohod integrals}\label{subsec:SAE}

We introduce the divergence operator~$\delta$ as the adjoint of the Malliavin derivative~$\DD$, and refer to~\cite[Section 1.3]{Nualart06} for the details. In the white noise case, the domain of~$\delta$ is a subset of~$L^2(\Omega\colon L^2(\TT))$ and for~$Y\in\mathrm{Dom}(\delta)$, $\delta(Y)$ is called the Skorohod integral.
If, for all~$t\in\TT$, $Y_t$ has the chaos expansion~
$Y_t=\sum_{n=0}^\infty I_n(f_n^t)$, then the following expansion holds~\cite[Proposition 1.3.7]{Nualart06}:
\[
\delta(Y)=\sum_{n=1}^\infty I_n\big(\widetilde f_{n-1}\big),
\]
where~$\widetilde f_{n}$ is the symmetrisation of~$f_n^\cdot$:
\begin{equation*}\label{eq:kernelsym}
\widetilde f_n(\ttn,t) := \frac{1}{n+1} \left( f_n^t(\ttn) + \sum_{i=1}^n f_n^{t_i}(t_1,\cdots,t_{i-1},t,t_{i+1},\cdots,t_n)\right).
\end{equation*}
We note however that~$I_n\big(\widetilde f_{n-1}\big)=I_n\big(f_{n-1}^\cdot\big)$, while the symmetrisation allows us to recover our initial setting from subsection~\ref{subsec:chaos}.

Naturally, we can also consider multiple integrals of the form~$\delta^k (Y)$, for some~$k\in\bN$ and~$Y\in\mathrm{Dom}(\delta^k)\subset L^2(\Omega\colon L^2(\TT^k))$, where we define by iteration~$\delta^k(Y):=\delta(\delta^{k-1}(Y))$. 
We introduce the process~$\{X(z)\}_{z\in\KK}$ as~$X(z):=\delta^k(Y^z(W))$, for some process~$Y^z_{\ttk}(W):=\sum_{n=0}^\infty I_n(f_n^{z,\ttk})$ such that~$Y^z(W)\in \mathrm{Dom}(\delta^k)$, for all~$z\in\KK$.

For simplicity, we restrict this example to ``small-noise" large deviations, that means processes that can be written as~$\cG(\ep W)$ where~$\cG$ is a measurable map from~$\cM(\TT)$ to~$C(\KK)$, as the kernels of their expansion does not depend on~$\ep$ and they easily meet Assumption~\ref{assu:main}(A). One could of course study processes of the type~$\cG^\ep(\ep W)$ with kernels depending on~$\ep$.

Let us define the measurable map~$\cG\colon \cM(\TT)\to C(\KK)$ such that~$X=\cG(W)$, and the small-noise perturbation given by
\begin{equation}\label{eq:xepanticipative}
X^\ep(z):= \cG(\ep W)(z) = \ep^k I_k \big(\widetilde f_0^{\ep,z}\big) + \sum_{n=k+1}^\infty \ep^n I_n \big(\widetilde f_{n-k }^{z} \big), 
\end{equation}
where~$f_0^{\ep,z}(\ttk):=\bE\big[Y^z_{\ttk}(\ep W)\big]$ and the kernels require~$k$ successive symmetrisations.
The kernels of the white noise chaos expansion of~$X^\ep$ are thus explicit functions of the kernels of~$Y$. Therefore Theorem~\ref{thm:main} ensures that if~$\widetilde f_0^{\ep,z}$ converges to some~$\widetilde f_0^z$ and the family~$\{f_n^{z}\}_{z\in\KK,n\in\bN}$ satisfies Assumption~\ref{assu:main}(B,C) as functions of~$L^2(\TT^{n+k})$, then a pathwise LDP for~$\{X^\ep\}_{\ep>0}$ holds with rate function
\begin{align}
    \Lambda(\psi)& := \inf \bigg\{ \half \norm{u}_{L^2(\TT)}^2 : u\in L^2(\TT),\, \psi(z)= \sum_{n=k}^\infty J^u_n (\widetilde f^z_{n-k})  \bigg\} \nonumber\\
    &= \inf \bigg\{ \half \norm{u}_{L^2(\TT)}^2 : u\in L^2(\TT),\, \psi(z)= J^u_k \bigg(\sum_{n=0}^\infty J^u_n (\widetilde f^z_{n})\bigg)  \bigg\}.
\label{eq:rfMultiSkorohod}
\end{align}
Let us consider the case where~$\TT^k=\KK$, $\{f^{z}_{n}\}$ are already symmetric functions of~$L^2(\TT^{n+k})$ and $\{f^{z,\ttk}_{n}\}$~satisfy Assumption~\ref{assu:main}(B,C) as functions of~$L^2(\TT^{n})$.
Then for each~$z\in\KK$, $\{Y^z(\ep W)\}_{\ep>0}$ satisfies an LDP in~$C(\TT^k)$ with rate function
\[
\widehat\Lambda(\phi^z) := \inf \bigg\{ \half \norm{u}_{L^2(\TT)}^2 : u\in L^2(\TT),\, \phi^z(\ttk)= \sum_{n=0}^\infty J^u_n (f^{z,\ttk}_{n})  \bigg\}.
\]
The display~\eqref{eq:rfMultiSkorohod} leads to the new representation
\[
\Lambda(\psi) = \inf \Big\{ \widehat\Lambda(\phi^z): \phi^z\in C(\KK), \psi(z) = J^u_k(\phi^z) \Big\},
\]
which is reminiscent of the contraction principle and of~\cite[Theorem 2.1]{Garcia08} where conditions on the integrand yield an LDP for the stochastic integral.



\subsection{Linear Skorohod equations}
We let~$\KK=\TT=[0,1]^d$ and consider for all~$\ep>0$ the small-noise equation
\begin{equation}
X^\ep(t)= x_0^t + \ep\delta(a^t X^\ep),
\label{eq:SkoEquation}
\end{equation}
where~$x_0^t\in\bR$ and~$a^t:\TT\to\bR$ for all~$t\in\TT$. 
Plugging in the putative chaos expansion of~$X$, the equation becomes
\begin{equation}\label{eq:putativeexpansion}
\sum_{n=0}^\infty \ep^n I_n(f_n^{t}) = x_0^t + \sum_{n=1}^\infty \ep^n I_n(g_{n-1}^t),
\end{equation}
where~$g_{n-1}^t(\ttn):=a^t(t_n)f_{n-1}^{t_n}(\bm{t_{n-1}})$ for all~$n\ge1$.
Ignoring the symmetrisation for now, we identify the Wiener chaos on both sides and deduce that~$f_0^t=x_0^t$ and~$f_n^t(\ttn)=g_{n-1}^t(\ttn)$ for all~$n\ge1$. Hence by induction we obtain
\begin{equation}\label{eq:SkoEqKernels}
f_n^t(\ttn)=a^t(t_n) a^{t_n}(t_{n-1})\cdots a^{t_2}(t_1) x_0^{t_1},
\end{equation}
which we can symmetrise in the following way:
\begin{equation}\label{eq:symmetrisation}
\widetilde f_n^t(\ttn) = \frac{1}{n!} \sum_\sigma f_n^t (t_{\sigma(1)},\cdots,t_{\sigma(n)}),
\end{equation}
with~$\sigma$ running over all permutations of~$\{1,\cdots,n\}$. This allows to recover the chaos expansion of our framework and give a proper meaning to the solution of~\eqref{eq:SkoEquation}. We note that the kernels are independent of~$\ep$ so that we can define the map~$\cG$ as:
\[
X^\ep:=\cG(\ep W)=\bE[\cG(\ep W)] + \sum_{n=1}^\infty \ep^n I_n(\widetilde f_n^t),
\]
Theorem~\ref{thm:main} yields explicit criteria for the LDP of~$\{X^\ep\}_{\ep>0}$ to hold with rate function~\eqref{eq:ratefunction}.

\begin{example}
To illustrate this, let us look at the special case~$\KK=\TT=[0,1]$ and~$a^t(s)=a^t \one_{s\le t}$ for some~$a^t\in\bR$. Then~\eqref{eq:SkoEqKernels} yields~$f_n^t(\ttn)=\one_{t_1\le t_2\le\cdots\le t_n\le t} \,a^t a^{t_n} \cdots a^{t_2} x_0^{t_1}$. We first notice that for almost all~$\ttn\in\TT^n$
\[
\Big\lvert \sum_\sigma \one_{t_{\sigma(1)}\le \cdots\le t_{\sigma(n)}\le t}\,a^t a^{t_{\sigma(n)}} \cdots a^{t_{\sigma(2)}} x_0^{t_{\sigma(1)}} \Big\lvert^2 
= \sum_\sigma  \one_{t_{\sigma(1)}\le \cdots\le t_{\sigma(n)}\le t}\,\abs{ a^t a^{t_{\sigma(n)}} \cdots a^{t_{\sigma(2)}} x_0^{t_{\sigma(1)}} }^2 
\]
because the cross terms vanish whenever~$t_1,\cdots,t_n$ are distinct. Hence from the symmetrisation~\eqref{eq:symmetrisation} we can compute explicitly
\begin{align*}
\norm{\widetilde f_n^{t}}^2_{L^2(\TT^n)}
&=\frac{1}{(n!)^2} \int_{\TT^n} \Big\lvert \sum_\sigma \one_{t_{\sigma(1)}\le t_{\sigma(2)}\le\cdots\le t_{\sigma(n)}\le t}\,a^t a^{t_{\sigma(n)}} \cdots a^{t_{\sigma(2)}} x_0^{t_{\sigma(1)} } \Big\lvert^2 \D \ttn  \ \\
&= \left(\frac{a^t}{n!}\right)^2 \sum_\sigma \int_0^t \int_0^{t_{\sigma(n)}} \cdots \int_0^{t_{\sigma(2)}}   \abs{a^{t_{\sigma(n)}} \cdots a^{t_{\sigma(2)}} x_0^{t_{\sigma(1)}} }^2 \,\D \ttn\\
&= \left(\frac{a^t}{n!}\right)^2 \sum_\sigma \frac{1}{n!}\int_{[0,t]^n} \abs{a^{t_{\sigma(n)}} \cdots a^{t_{\sigma(2)}} x_0^{t_{\sigma(1)}} }^2 \,\D\ttn\\
&= \left(\frac{a^t}{n!}\right)^2 \norm{a}^{2(n-1)}_{L^2[0,t]} \norm{x_0}^{2}_{L^2[0,t]},
\end{align*}
which yields Assumption~\ref{assu:main}(B) as the kernels are of exponential type~\eqref{eq:expotype}.
Analogously we see that
\[
\norm{\widetilde f_n^{t}-\widetilde f_n^{s}}_{L^2(\TT^n)} \le \abs{a^t-a^s}\frac{\norm{a}^{n-1}_{L^2(\TT)} \norm{x_0}_{L^2(\TT)}}{n!},
\]
which shows that the continuity criterion depends explicitly on the regularity of the map~$t\mapsto a^t$. For instance, if the latter is Hölder continuous then Assumption~\ref{assu:main}(C) is automatically satisfied.
\end{example}

One could generalise~\eqref{eq:SkoEquation} by considering multiple Skorohod integrals or taking~$a^t$ to belong to a given Wiener chaos and applying~\cite[Proposition 1.1.3]{Nualart06}.

\subsection{Fractional integrals in the Wick--It\^o sense}\label{subsec:wick}
Let~$\KK=\TT=[0,1]$ and~$\widehat\mu$ be the Lebesgue measure, such that~$W$ is a standard Brownian motion.
We will use the Hida space~$(\cS)$ of stochastic test functions and its dual~$(\mathcal S)^*$ the Hida space of stochastic distributions; we refer to~\cite[Definition A.1.4]{Biagini08} for the definitions.
Consider the Hermite functions~$\{\xi_k\}_{k\ge1}$~\cite[Equation (A.6)]{Biagini08} and the operator~$\MM$~\cite[Equation (4.2)]{Biagini08} which, in particular, has the property~$\int_\bR \MM\one_{[0,t]}(x) \MM\one_{[0,s]}(x)\dx = R_H(t,s)$, the covariance function of the fractional Brownian motion. The latter was introduced in Example~\ref{ex:fbm} and we denote it by~$B^H$. The fractional white noise~$W^H$, not to be confused with the white noise \emph{measure}, is seen as the derivative of~$B^H$ in~$(\cS)^*$ and is defined in~\cite[Equation (4.16)]{Biagini08}:
\[
W^H(t) := \sum_{k=1}^\infty \MM\xi_k(t) I_1(\xi_k).
\]
Let~$Y:\TT\to L^2(\Omega)\subset(\cS)^\ast$ have the Wiener chaos expansion~$Y_t=\sum_{n=0}^\infty I_n(f_n^t)$ with respect to the standard Brownian motion. Then the Wick--It\^o integral with respect to the fractional Brownian motion~$B^H$ is defined as~\cite[Definition 4.2.2]{Biagini08}:
\[
\int_\TT Y_t \,\D B^H_t := \int_\TT Y_t \diamond W^H(t) \dt,
\]
where~$\diamond$ denotes the Wick product.
We then notice by~\cite[Theorem A.3.7]{Biagini08} and~\cite[Proposition 1.1.2]{Nualart06} that for all~$f\in L^2(\TT^n)$ and~$g\in L^2(\TT)$:
\begin{align*}
I_n(f) \diamond I_1(g) &= I_n(f) I_1(g) - \int_\TT g(t) \DD_t I_n(f) \dt \\
&= I_{n+1}(f\otimes g) + n I_{n-1}(f\otimes_1 g) - \int_\TT g(t) \,n I_{n-1}\big(f(\cdot,t)\big) \dt,
\end{align*}
where~$(f\otimes_1 g)(\bm{t_{n-1}}):=\int_\TT f(\bm{t_{n-1}},t) g(t) \dt$. Therefore we obtain~$I_n(f) \diamond I_1(g) =I_{n+1}(f\otimes g)$. By linearity this yields, if the series are convergent:
\begin{align}\label{eq:WickChaos}
    \int_\TT Y_s \diamond W^H(s)\dt
    = \int_\TT \, \sum_{\substack{k=1\\n=0}}^\infty \Big(I_{n+1}\big(f_n^s \otimes \xi_k\big) \MM\xi_k(s) \Big) \ds
    = \sum_{n=1}^\infty I_n(g_n),
\end{align}
where~$g_n:= \int_\TT\sum_{k=1}^\infty \big(f_{n-1}^s \otimes \xi_k\big) \MM\xi_k(s) \ds$.
One can symmetrise the kernels as in~\eqref{eq:symmetrisation} to recover the white noise chaos expansion.

Let us now introduce the family of processes~$X^\ep(t)=\ep \int_\TT Y^t_s(\ep W) \,\D B^H_s$ for all~$\ep>0$, where the integral is taken in the Wick--It\^o sense and~$Y^t_s(W):=\sum_{n=0}^\infty I_n(f_n^{s,t})$. There exists a measurable map~$\cG:\cM(\TT)\to\bR$ such that~$\cG(\ep W):=X^\ep$ and similarly to~\eqref{eq:WickChaos} we find that
\[
X^\ep(t)= \ep I_1 \big(g_1^{\ep,t}\big) + \sum_{n=2}^\infty \ep^n I_n\left(g_n^t\right),
\]
where~$g_1^{\ep,t}:=\int_\TT\sum_{k=1}^\infty \big(\bE[Y^t_s(\ep W)]\otimes \xi_k\big) \MM\xi_k(s) \ds$, and $g_n^t:= \int_\TT\sum_{k=1}^\infty \big(f_{n-1}^{s,t} \otimes \xi_k\big) \MM\xi_k(s) \ds$ for all~$n\ge2$. After symmetrisation of the kernels,
sufficient conditions for the large deviations of~$\{X^\ep\}_{\ep>0}$ follow from Theorem~\ref{thm:main} and can be reduced to estimates on~$\lVert f_{n-1}^{s,t} \otimes \xi_k \lVert_{L^2(\TT^n)}$ and $\lVert(f_{n-1}^{s,t_1}-f_{n-1}^{s,t_2}) \otimes \xi_k\lVert_{L^2(\TT^n)}$. These conditions imply in particular that the series are convergent and that~$X^\ep$ has continuous paths.

Similarly to the previous section we can consider fractional equations of the type
\[
X^\ep(t) = x_0^t + \ep \int_\TT a^t(s) X^\ep(s) \,\D B^H_s,
\]
where~$a$ is deterministic, and identify the kernels of the chaos expansion as in~\eqref{eq:putativeexpansion}. They satisfy~$f_0^t=x_0^t$ and
\[
f_n^t(\ttn) = \int_\TT a^t(s) f_{n-1}^s(\bm{t_{n-1}}) \sum_{k=1}^\infty \xi_k(t_n)\MM\xi_k(s) \ds, \quad \mathrm{for\; all }\; n\ge1.
\]
Even though the expression is more intricate than in the white noise case~\eqref{eq:SkoEqKernels}, after symmetrisation this also yields sufficient conditions for the pathwise LDP of the small-noise version of~$X$.
\subsection{Martingales} \label{subsec:martingale}
We come back to the general case for~$\KK,\TT$ and~$\widehat\mu$ and illustrate the regularity condition~\eqref{eq:equicont} through an example where the $z$-dependence of the kernels is more explicit, and show the connection with closed martingales.
Consider the case where the kernels have the form~$f_n^{\ep,z} = \mathrm f_n^\ep \, \mathfrak{f}_n^z$, for some functions~$\mathrm f_n^\ep,\,\mathfrak{f}_n^z\colon\TT^n\to\bR$ and for all~$\ep>0,\,z\in\KK$ and~$n\ge0$.
Then, for all~$q>2$, we can write by H\"older's inequality
\[
\norm{f_n^{\ep,z}- f_n^{\ep,y}}_{L^2(\TT^n)}
\le \norm{\mathrm f_n^\ep}_{L^q(\TT^n)} \norm{\mathfrak{f}_n^z-\mathfrak{f}_n^y}_{L^\frac{2q}{q-2}(\TT^n)},
\]
which allows to split the responsibilities of Assumption~\ref{assu:main} (B) and (C). Indeed, both are satisfied if there exist~$q>2$ and~$\Delta>0$ such that, for all~$n$ large enough, all~$\ep$ small enough and all~$y,z\in\KK$,
\[
\norm{\mathrm f_n^\ep}_{L^q(\TT^n)} \le \frac{\Delta^n}{n!} \quad \mathrm{and} \quad 
\norm{\mathfrak{f}_n^z-\mathfrak{f}_n^y}_{L^\frac{2q}{q-2}(\TT^n)} \le \omega(\norm{z-y}).
\]

For all~$A\in\cT$, let~$\cF_A$ be the~$\sigma$-field generated by~$\{\dot W(B):B\subset A,\,B\in\cT\}$. Moreover, assume there exists an~$L^2(\Omega)$-random variable~$\widetilde X^\ep=\sum_{n=0}^\infty \ep^n I_n(f_n^\ep)$ such that, for all~$z\in\KK$, there exists~$A_z\in\cT$ such that~$X^\ep(z)=\bE \big[ \widetilde X^\ep \lvert \cF_{A_z} \big]$. Then, Lemma 1.2.5 in~\cite{Nualart06} entails that
\[
X^\ep(z) = \sum_{n=0}^\infty \ep^n I_n \big(f_n^\ep \,\one_{A_z}^{\otimes n} \big).
\]
\begin{example}
Let~$\KK=\TT=[0,1]^d$ and~$A_z = [0,z]:=\{y\in\TT:y(i)\le z(i) \;\mathrm{ for\; all }\;i\}$. Then, for~$y,z\in\KK$ such that~$\norm{z-y}\le 1$, we have
\[
\norm{\mathfrak{f}_n^z-\mathfrak{f}_n^y}_{L^\frac{2q}{q-2}(\TT^n)} = \norm{\one_{A_z}^{\otimes n}-\one_{A_y}^{\otimes n}}_{L^\frac{2q}{q-2}(\TT^n)} 
= \left( \prod_{i=1}^d \abs{z(i)-y(i)} \right)^{n\frac{q-2}{2q}} 
 \le \norm{z-y}^{\frac{q-2}{2q}}.
\]
If~$d=1$, $X^\ep$ is a closed martingale and this type of kernel corresponds to the ``adapted case" in~\cite{MWNPA92}.
\end{example}

\section{Conclusion}\label{sec:ccl}
This paper provides a pathwise large deviations principle for Wiener chaos expansions, and thus establishes an intermediate level of generality between the abstract machinery of Budhiraja and Dupuis~\cite{BD01} and specific models such as stochastic equations. 
We also show that this approach embraces and sheds new light on the asymptotic behaviour of Gaussian functionals that can be examined through the chaos expansion lens.
Finally, we believe it opens up room for future research both horizontally---for different types of driving noise, and vertically---for exploring the scope of applications.

\bibliographystyle{alpha}
\bibliography{bib}

\end{document}